\newtheorem*{thma}{Theorem~A}
\newtheorem{theorem}{Theorem}[section]
\newtheorem{lemma}[theorem]{Lemma}
\newtheorem{proposition}[theorem]{Proposition}
\newtheorem{corollary}[theorem]{Corollary}
\newtheorem{claim}[theorem]{Claim}
\newtheorem{question}[theorem]{Question}
\theoremstyle{definition}
\newtheorem{definition}[theorem]{Definition}
\newtheorem{remark}[theorem]{Remark}
\newcommand{\dom}{\mathrm{dom}}
\newcommand{\bb}{\mathbb}
\newcommand{\otp}{\mathrm{otp}}
\newcommand{\pred}{\mathrm{pred}}
\newcommand{\mb}{\mathbf}
\newcommand{\is}{\mathrm{IS}}
\newcommand{\mc}{\mathcal}
\renewcommand{\bf}{\mathbf}
\newcommand{\suc}{\mathrm{succ}}
\newcommand{\PG}{\mathsf{PG}}
\newcommand{\DDF}{\mathsf{DDF}}
\title{Polish space partition principles and the Halpern-L\"auchli theorem}
\author{Chris Lambie-Hanson and Andy Zucker}
\address{\newline Institute of Mathematics of the Czech Academy of Sciences \newline
\v{Z}itn\'{a} 25, Prague 1, 115 67, Czech Republic}
\email{lambiehanson@math.cas.cz}
\urladdr{http://math.cas.cz/lambiehanson}
\address{\newline Department of Pure Mathematics\newline University of Waterloo\newline
200 University Ave.\ W.\newline
Waterloo, ON, Canada N2L 3G1}
\email{a3zucker@uwaterloo.ca}
\begin{document}
\maketitle

\begin{abstract}
	The Halpern-L\"auchli theorem, a combinatorial result about trees, admits an elegant proof due to Harrington using ideas from forcing. In an attempt to distill the combinatorial essence of this proof, we isolate various partition principles about products of perfect Polish spaces. These principles yield straightforward proofs of the Halpern-L\"auchli theorem, and the same forcing from Harrington's proof can force their consistency. We also show that these principles are not ZFC theorems by showing that they put lower bounds on the size of the continuum.
\end{abstract}

\section{Introduction}

\let\thefootnote\relax\footnote{2020 Mathematics Subject Classification. Primary: 03E02. Secondary: 05D10.}
\let\thefootnote\relax\footnote{Keywords: Halpern-L\"auchli theorem, Cohen forcing, Polish spaces, partition relations}
\let\thefootnote\relax\footnote{The first author was supported by the institutional grant RVO:67985840. The second author was supported by NSF Grant DMS-2054302.}

The Halpern-L\"auchli theorem, first proven in \cite{halpern_lauchli}, is a partition principle about products of finitely branching trees. While it is entirely combinatorial in nature, it has deep connections to logic, both in its original motivation --- it was a key tool in Halpern and L\'{e}vy's proof \cite{halpern_levy} that, over $\mathsf{ZF}$, the Boolean Prime Ideal theorem does not imply the Axiom of Choice --- and in some methods of proving it. In particular, arguably the most elegant proof of the theorem, due to Harrington, uses some ideas from forcing (see \cite{DobrinenSurvey} or \cite{todorcevic_farah} for a presentation of Harrington's proof). Here, we investigate some of the combinatorial ideas underlying Harrington's proof and, building on work from \cite{hl2d}, we introduce and study a family of statements about arbitrary (i.e.\ not necessarily Borel) finite partitions of a product of finitely many perfect Polish spaces. The simplest of these statements are as in the following definition.

\begin{definition}
	\label{Def:PG}
	Recall that a Polish space is \emph{perfect} if it contains no isolated points. Given $0< d< \omega$ and a sequence $\langle X_0,\ldots,X_{d-1}\rangle$ of perfect Polish spaces, a \emph{somewhere dense grid} is a subset of $\prod_{i< d} X_i$ of the form $\prod_{i<d} Y_i$, where each $Y_i\subseteq X_i$ is somewhere dense (in the ordinary topological sense).
	
	The \emph{Polish grid principle} in dimension $d$, denoted $\mathsf{PG}_d$, is the statement that for any sequence $\langle X_0,\ldots,X_{d-1}\rangle$ of perfect Polish spaces, any $r< \omega$ and any coloring $\gamma\colon \prod_{i< d} X_i\to r$, there is a monochromatic somewhere dense grid. The principle $\mathsf{PG}$ is the statement that $\mathsf{PG}_d$ holds for every $d< \omega$.
	
	More generally, for any cardinal $\kappa$, let $\PG_d(\kappa)$ be the statement that for any sequence $\langle X_0, \ldots, X_{d-1} \rangle$ of perfect Polish spaces and any coloring $\gamma\colon \prod_{i<d}X_i\to \kappa$, there is a monochromatic somewhere dense grid, and let $\PG(\kappa)$ denote the statement that $\PG_d(\kappa)$ holds for every $d < \omega$.
\end{definition}

We shall see in Section~\ref{Sec:PGdef} that $\PG_d$ yields a simple, direct proof of the $d$-dimensional Halpern-L\"{a}uchli theorem; in fact, it will be immediate from the proof that a natural weakening of $\PG_d$, which we will define in Section~\ref{Sec:PGdef} and which we will denote by $\DDF_d$, suffices for this derivation. Unlike the Halpern-L\"{a}uchli theorem, though, at least for $d > 2$ the principles $\PG_d$ and $\DDF_d$ are independent of $\mathsf{ZFC}$. Much of this paper is concerned with the study of this independence; we obtain, for instance, the following complementary results.

\begin{thma}
  Let $2 \leq d < \omega$.
  \begin{enumerate}
    \item If $\DDF_d$ holds, then $2^{\aleph_0} \geq \aleph_{d-1}$.
    \item If $\PG_d(\aleph_0)$ holds, then $2^{\aleph_0} \geq \aleph_d$.
    \item $\PG_d(\aleph_0)$ holds after adding at least $(\beth_{d-1})^+$-many Cohen reals to any model of $\mathsf{ZFC}$.
  \end{enumerate}
\end{thma}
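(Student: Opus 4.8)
The plan is to prove (1) and (2) together as ``lower bound'' results, via contraposition using decomposition theorems of Sierpi\'nski--Kuratowski type, and to prove (3) by a forcing argument in the spirit of Harrington's proof, powered by the Erd\H{o}s--Rado theorem. The two halves are essentially dual: in the lower bounds one matches the \emph{number of colours} (finite versus $\aleph_0$) to the \emph{size of sections} (finite versus countable) of a Kuratowski decomposition, and hence the cardinal ($\aleph_{d-1}$ versus $\aleph_d$) to the principle ($\DDF_d$ versus $\PG_d(\aleph_0)$); in (3) the Cohen count $(\beth_{d-1})^+$ is exactly what the Erd\H{o}s--Rado relation $(\beth_{d-1})^+ \to (\aleph_1)^d_{\aleph_0}$ demands for an exponent-$d$ partition.

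For (2), I would argue the contrapositive: assuming $2^{\aleph_0} \le \aleph_{d-1}$, take each $X_i = 2^\omega$ and build a colouring $\gamma\colon \prod_{i<d} X_i \to \omega$ with no monochromatic somewhere dense grid. The first ingredient is the countable-section form of Kuratowski's theorem: if $|X| \le \aleph_{d-1}$ then $X^d = \bigcup_{i<d} A_i$, where each $A_i$ has countable sections in direction $i$ (for every fixing of the coordinates other than $i$, only countably many values of the $i$-th coordinate land in $A_i$). The ordinal form --- a partition of $(\omega_{d-1})^d$ --- is proved by induction on $d$, the step using for each $\delta < \omega_{d-1}$ an injection $e_\delta\colon \delta \to \omega_{d-2}$ and the dimension-$(d-1)$ partition, charging each tuple to a non-maximal coordinate as selected by the lower-dimensional partition applied to the $e_\delta$-images; fixing injections $X_i \hookrightarrow \omega_{d-1}$ (possible since a perfect Polish space has size $2^{\aleph_0} \le \aleph_{d-1}$) transfers this to $\prod_{i<d} X_i$. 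The second, and decisive, ingredient is a refinement that upgrades ``countable sections'' to ``one point per colour per section'', which is exactly what the extra countably many colours buy: after disjointifying the $A_i$, enumerate each countable section $A_i \cap \ell$ injectively and set $\gamma(\vec x) = (i,m)$ when $\vec x \in A_i$ and $x_i$ is the $m$-th point of its section, a colouring into $d \times \omega \cong \omega$. If $\prod_{i<d} Y_i$ were monochromatic of colour $(i,m)$, fixing a point of each $Y_j$ ($j \ne i$) forces $Y_i$ to be a single point, contradicting that a somewhere dense subset of a perfect Polish space is infinite. This gives $\neg\PG_d(\aleph_0)$. Statement (1) is the same scheme run with the \emph{finite}-section form of Kuratowski's theorem, available already from $2^{\aleph_0} \le \aleph_{d-2}$ and needing only finitely many colours; finite sections are nowhere dense, so the weaker configurations witnessing $\DDF_d$ (once its definition in Section~\ref{Sec:PGdef} is in hand, it still demands a somewhere dense set in some coordinate) are excluded.

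For (3) I would mimic Harrington's proof. Force with $\mathrm{Fn}(\lambda,2)$ for $\lambda \ge (\beth_{d-1})^+$, writing the generic as mutually Cohen-generic points $\langle b_\alpha : \alpha < \lambda\rangle$; since Cohen forcing adds a generic point of any perfect Polish space (the poset of nonempty basic open sets is countable and atomless), I may reduce to $X_i = 2^\omega$. Given a colouring $\gamma$ in the extension with name $\dot\gamma$, I define in the ground model a colouring of $[\lambda]^d$: to an increasing tuple $\alpha_0 < \cdots < \alpha_{d-1}$ I assign the finite ``type'' consisting of a condition deciding $\dot\gamma(b_{\alpha_0},\ldots,b_{\alpha_{d-1}})$, the decided colour, and the stems that condition fixes on the relevant reals, all read off invariantly under the increasing bijection of the tuple. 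Only countably many types arise, so $(\beth_{d-1})^+ \to (\aleph_1)^d_{\aleph_0}$ yields a homogeneous $H$ of size $\aleph_1$ on which the colour $c^*$ and the stems $s_i$ are constant. Splitting $H$ into increasing blocks $H_0 < \cdots < H_{d-1}$ and setting $Y_i = \{b_\alpha : \alpha \in H_i\}$, every transversal of $\prod_{i<d} Y_i$ is an increasing tuple of $H$, so $\prod_{i<d} Y_i$ is monochromatic of colour $c^*$, while the common stems $s_i$ mark the basic open sets in which each $Y_i$ should be dense.

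The main obstacle is the bookkeeping in (3). The value $\gamma(b_{\alpha_0},\ldots,b_{\alpha_{d-1}})$ lives in the extension, so to run Erd\H{o}s--Rado in the ground model I must replace it by the ground-model-definable data of a deciding condition, and must arrange that this data is genuinely finite and tuple-invariant so that only countably many types occur. The more delicate point is the last one: verifying that the homogeneous generics $\{b_\alpha : \alpha \in H_i\}$ are actually dense in $[s_i]$ --- hence somewhere dense --- rather than merely uncountable; this is precisely where mutual genericity of the Cohen reals (and a density argument ensuring they realize every extension of their common stem) must be used, and I expect this density verification, together with the invariant coding of the deciding data, to be the crux of the forcing argument. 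By contrast the lower bounds (1) and (2) reduce to the classical Kuratowski decompositions plus the elementary ``enumerate each countable section'' refinement, with essentially no further subtlety.
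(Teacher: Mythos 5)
Your argument for clause (2) is correct and takes a genuinely different route from the paper: you use the classical Kuratowski--Sierpi\'nski countable-section decomposition of $\prod_{i<d}X_i$ into $d$ pieces plus the ``one point per colour per section'' refinement, and the contradiction with the infinitude of a somewhere dense set is sound. The paper instead builds recursive colourings $c_n$ of $(\omega_n)^{n+1}$ with a ``distinguished element'' and combines them with finite Ramsey theory to show that products of large finite sets receive many colours; this stronger conclusion (failure of weak $\omega$-partition \emph{sub}regularity, i.e.\ no \emph{finite union} of colour classes contains the configuration) is exactly what the paper needs to feed into clause (1), and it is here that your plan breaks. Your clause (1) rests on the claim that a somewhere-DDF set cannot sit inside a piece $A_i$ with finite sections in direction $i$ because ``finite sections are nowhere dense.'' But the DDF definition is asymmetric: only the sections of the successive \emph{projections} (over the earlier coordinates) are required to be dense, and only in the last coordinate are full sections dense. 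A DDF set can have all of its honest sections in direction $0$ finite --- e.g.\ in dimension $2$, $Z=\bigcup_n\{x_n\}\times Y_n$ with $\{x_n\}$ dense and $Y_n=\{y: y(0)\ge n\}$ is DDF yet every section $Z^y$ is finite --- so the Kuratowski pieces $A_0,\dots,A_{d-2}$ are not excluded and your $d$-colouring may well have a monochromatic somewhere-DDF set. The paper avoids this by proving the $\omega$-subregularity failure in dimension $d-1$ (Theorem~\ref{Thm:DDFbad}) and then using the \emph{last} coordinate, whose DDF sections genuinely are dense, to decode that $\omega$-colouring into a $2$-colouring one dimension up (Proposition~\ref{sideways_prop}); some such mechanism is needed in your argument too.

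For clause (3) the strategy is right but the central step is unjustified. After Erd\H{o}s--Rado gives a homogeneous $H$ on which the collapsed deciding conditions, stems, and decided colour agree, you set $Y_i=\{b_\alpha:\alpha\in H_i\}$ for full blocks $H_i$ and assert that every transversal is monochromatic. Homogeneity only tells you that each $q_a$ \emph{forces} $\dot\gamma$ of the corresponding tuple of generics to be $j^*$; it does not put any $q_a$ into the generic filter, so in $V[G]$ the colour of such a tuple can be arbitrary. This is precisely what the paper's recursive construction handles: one chooses the ordinals $\alpha_{i,k}$ one at a time, using the predensity claims (Claim~\ref{predense_claim} and its successor) to guarantee that $q_a\in G$ simultaneously for \emph{all} transversals $a$ of the chosen sets, while also steering each new branch above $s_i{}^\frown t_k$ to secure $s_i$-density. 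Moreover, those predensity arguments need the alignment structure of a \emph{uniform} $d$-dimensional $\Delta$-system (Corollary~\ref{aleph_1_cor}), which plain Erd\H{o}s--Rado with exponent $d$ and your type-colouring does not deliver; you would need either that lemma or a larger exponent comparing pairs of tuples. So the ``crux'' is not only the topological density of the $Y_i$, as you suggest, but first and foremost arranging that the deciding conditions are jointly generic.
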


Clause 3 of Theorem A, together with Shoenfield absoluteness, yields a new proof of the Halpern-L\"{a}uchli theorem. This proof can be seen as a recasting of Harrington's proof that seeks to pull apart the forcing machinery and the combinatorial principles underlying the Halpern-L\"{a}uchli theorem. Indeed, our proof of Theorem A(3) is essentially an adaptation of Harrington's proof of the Halpern-L\"{a}uchli theorem.

We feel that Theorem A is also of independent interest for isolating a stratified family of natural combinatorial statements that place increasingly strong requirements on the value of the continuum. In this direction, Theorem A yields a sharp result for the principle $\mathsf{PG}_d(\aleph_0)$: for all $2 \leq d < \omega$, if $2^{\aleph_0} < \aleph_d$, then $\mathsf{PG}_d(\aleph_0)$ fails, whereas it is consistent that $2^{\aleph_0} = \aleph_d$ and $\mathsf{PG}_d(\aleph_0)$ holds (for example, after adding $\aleph_d$-many Cohen reals to a model of $\mathsf{GCH}$). This can be seen as part of a line of investigation into the relationship between questions of \emph{dimensionality} in the context of the real numbers (or uncountable Polish spaces, more generally), and the cardinals $\{\aleph_d : d < \omega\}$. Other research in this vein includes, e.g., Raghavan and Todor\v{c}evi\'c's work on Galvin's problem (\cite{raghavan_todorcevic_2020}, \cite{raghavan_todorcevic_hd}), Komj\'{a}th's work on covering the plane by finite many clouds (\cite{komjath}) and the work of several authors on additive partition relations for the real numbers (\cite{hindman_leader_strauss}, \cite{inf_mono_sumsets}, \cite{zhang}). 

The work of Raghavan and Todor\v{c}evi\'c in particular obtains results with interesting parallels to ours. They consider the problem of finding the \emph{$d$-dimensional Ramsey degrees} of the topological space $\bb{Q}$, i.e.\ given a coloring of $[\bb{R}]^d$ into finitely many colors, find $X\subseteq \bb{R}$ homeomorphic to $\bb{Q}$ such that $[X]^d$ takes a fixed small number of colors (not depending on the starting number of colors). In \cite{raghavan_todorcevic_hd}, they show that $2^{\aleph_0}\leq \aleph_{d-2}$ implies that the $d$-dimensional Ramsey degree of the space $\bb{Q}$ is infinite, and in \cite{raghavan_todorcevic_2020}, they show that in $\mathsf{ZFC}$ plus some large cardinal hypotheses, the $2$-dimensional Ramsey degree of the space $\bb{Q}$ is $2$. 

Note that Theorem A(1) does not yield any nontrivial information from $\DDF_2$ or $\PG_2$. In fact, we shall see in Section~\ref{Sec:PGdef} that $\DDF_2$ is a theorem of $\mathsf{ZFC}$. Moreover, $\PG_2$ is a theorem of $\mathsf{ZFC}$ when restricted to 2-colorings. It remains open whether the full $\PG_2$ is a theorem of $\mathsf{ZFC}$. It would be especially interesting if similar techniques as in  \cite{raghavan_todorcevic_2020} could be used to show that $\mathsf{PG}_2$ is implied by some large cardinal hypotheses.

Finally, the tools developed in our proof of Clauses 1 and 2 of Theorem A yield a new 
proof of a recent result of Bannister, Bergfalk, Moore, and Todor\v{c}evi\'{c} 
from \cite{bbmt} about provable failures of the Partition Hypothesis introduced in that 
paper. The original proof of this fact in \cite{bbmt} makes heavy use of ideas from 
simplicial homology, whereas our proof is purely combinatorial/set theoretic.

The structure of the paper is as follows. In Section~\ref{Sec:prelim}, we give our 
background notational conventions and basic definitions regarding trees and the 
Halpern-L\"{a}uchli theorem. In Section~\ref{Sec:PGdef}, we introduce the partition 
principles that form the main object of study of the paper. We show that they yield 
immediate derivations of relevant instances of the Halpern-L\"{a}uchli theorem. We 
also show that the principle $\DDF_2$ is provable in $\mathsf{ZFC}$, as is the principle 
$\PG_2$ when restricted to 2-colorings. In Section~\ref{Sec:DDFbad}, we prove 
clauses 1 and 2 of Theorem A. We also give a direct proof of the aforementioned 
result from \cite{bbmt} about provable failures of the Partition Hypothesis. 
In Section~\ref{Sec:PG}, we prove clause 3 of Theorem A. We conclude in 
Section~\ref{Sec:coding} with a brief discussion about using the partition principles 
introduced here to yield variations of the Halpern-L\"{a}uchli theorem pertaining to 
coding trees.

\medskip

Some related and partially overlapping results were recently independently obtained by 
Nedeljko Stefanovi\'{c} \cite{stefanovic}. In particular, he proves that 
the principle $\mathsf{DDF}$ holds after adding $\beth_\omega$-many Cohen reals to any model 
of $\mathsf{ZFC}$, thereby also yielding the proof of the Halpern-L\"{a}uchli theorem described 
above immediately after the statement of Theorem A. Stefanovi\'{c} also investigates these 
combinatorial principles in certain models of $\mathsf{ZF} + \neg \mathsf{AC}$, in particular 
proving that $\PG$ holds in Cohen's symmetric model (the same model used by Halpern and 
L\"{e}vy in \cite{halpern_levy} to show that $\mathsf{BPI}$ does not imply $\mathsf{AC}$).

\section{Preliminaries} \label{Sec:prelim}

\subsection{Notation}

If $a$ is a 
set of ordinals, then we will sometimes think of $a$ as the increasing function whose 
domain is the order type of $a$ (which we denote by $\otp(a)$). In particular, if $\eta < \otp(a)$, then 
$a(\eta)$ is the unique $\beta \in a$ such that $\otp(a \cap \beta) = \eta$. 
Similarly, if $I \subseteq \otp(a)$, then $a[I]$ denotes the set $\{a(\eta) \mid 
\eta \in I\}$. If $X$ is any set and $n < \omega$, then $[X]^n$ denotes the set 
of $n$-element subsets of $X$. If $X$ is a set of ordinals, then we will use 
the notation $(\alpha_0, \ldots, \alpha_{n-1}) \in [X]^n$ to denote the assertion that 
$\{\alpha_0, \ldots, \alpha_{n-1}\} \in [X]^n$ and $\alpha_0 < \ldots < \alpha_{n-1}$.

\subsection{Trees and the Halpern-L\"auchli theorem}

A \emph{tree} is a partially ordered set $T$ with the property that for every $t\in T$, the set $\pred_T(t):= \{s\in T: s<_T t\}$ of \emph{predecessors} of $t\in T$ is well-ordered. In particular, any subset $S\subseteq T$ equipped with the partial order induced from $T$ is also a tree. We write $h_T(t) := \otp(\pred_T(t))$ and call this the \emph{height} of $t\in T$. If $\alpha$ is an ordinal, we write $T(\alpha) := \{t\in T: h_T(t) = \alpha\}$ for level $\alpha$ of $T$. The \emph{height} of $T$ is the ordinal $h(T) := \sup\{h_T(t): t\in T\}$. We write $\suc_T(t) := \{u\in T: t<_T u\}$, and given $\alpha> h_T(t)$, we write $\suc_T(t, \alpha) := \{u\in \suc_T(t): h_T(u) = \alpha\}$. We write $\is_T(t) := \suc_T(t, h_T(t)+1)$ for the set of \emph{immediate successors} of $t\in T$. If $S\subseteq T$, we can also write $\suc_T(S) := \bigcup_{t\in S} \suc_T(t)$ and $\is_T(S) := \bigcup_{t\in S} \is_T(t)$, and if $\alpha > h_T(t)$ for every $t\in S$, we can also write $\suc_T(S, \alpha) := \bigcup_{t\in S} \suc_T(t, \alpha)$. We say $T$ is \emph{finitely branching} if $\is_T(t)$ is finite for every $t\in T$, and we say that $T$ has \emph{no terminal nodes} if $\is_T(t)\neq \emptyset$ for every $t\in T$. We say $T$ is \emph{perfect} if for every $t\in T$, some $u\in\suc_T(t)$ has $|\is_T(u)|\geq 2$. We say $T$ is \emph{rooted} if $|T(0)| = 1$. Unless specified otherwise, all trees in this paper will be rooted, perfect, finitely branching, height $\omega$, and without terminal nodes.

Given a tree $T$, a \emph{branch through $T$} is a maximal linearly ordered subset of $T$. We write $[T]$ for the set of branches through $T$. Every $x\in [T]$ satisfies $|x\cap T(m)| = 1$ for every $m< \omega$, so we write $x(m)\in T(m)$ for this unique element. We equip $[T]$ with the topology of pointwise convergence, where $x_i\to x$ iff for every $m< \omega$, we eventually have $x_i(m) = x(m)$. Because of our standing assumptions on trees, we have that $[T]$ is homeomorphic to Cantor space.

Now suppose $0< d< \omega$ and that $\vec{T} = \langle T_0,\ldots,T_{d-1}\rangle$ is a sequence of trees with no terminal nodes. The \emph{level product} of these trees is the set $\bigotimes \vec{T} := \bigcup_{m< \omega} \prod_{i< d} T_i(m)$. We turn $\bigotimes \vec{T}$ into a tree, where given $\vec{s}= (s_0,\ldots,s_{d-1})$ and $\vec{t} = (t_0,\ldots,t_{d-1})$ in $\bigotimes \vec{T}$, we set $\vec{s}\leq_{\bigotimes\vec{T}} \vec{t}$ iff $s_i\leq_{T_i} t_i$ for each $i< d$. To ease notation, we write $\leq_{\vec{T}}$ in place of $\leq_{\bigotimes\vec{T}}$. Note that $\left(\bigotimes \vec{T}\right)(m) = \prod_{i< d} T_i(m)$ and that as spaces, we have $[\bigotimes\vec{T}]\cong \prod_{i< d} [T_i]$. Given $\vec{x} \in \prod_{i<d} [T_i]$, if 
$i < d$, then we will always denote the $i^{\mathrm{th}}$ entry in $\vec{x}$ as 
$x_i \in [T_i]$. Moreover, if $m < \omega$, then we let $\vec{x}(m) := \langle x_0(m), x_1(m), \ldots, x_{d-1}(m) \rangle \in \prod_{i<d} T_i(m)$.

\begin{definition}
	\label{Def:StrongSubtree}
	Let $T$ be a tree with no terminal nodes, and let $a\subseteq \omega$ be infinite. An \emph{$a$-strong subtree} of $T$ is a subset $S\subseteq T$ built inductively as follows. 
	\begin{itemize}
		\item 
		Pick any $t_0\in T(a(0))$ and set $S(0) = \{t_0\}$.
		\item
		Inductively assume for some $m< \omega$ that $S(m)$ has been determined and that $S(m)\subseteq T(a(m))$. Then for every $u\in \is_T(S(m))$, pick some $t_u\in T(a(m+1))$, and set $S(m+1) = \{t_u: u\in \is_T(S(m))\}$. 
	\end{itemize} 
\end{definition}

Notice that if $S\subseteq T$ is an $a$-strong subtree, then the set $S(m)$ from the inductive construction above is in fact level $m$ of the tree $S$. Let us remark that if $\vec{T} = \langle T_0,\ldots,T_{d-1}\rangle$ is a finite sequence of trees and $S_i\subseteq T_i$ is an $a$-strong subtree for each $i< d$, then $\bigotimes \vec{S}$ is an $a$-strong subtree of $\bigotimes \vec{T}$; however, not every $a$-strong subtree of $\bigotimes\vec{T}$ has this particularly nice form.

We can now state the theorem of Halpern and L\"auchli which is the main topic of this paper.

\begin{theorem}[Halpern-L\"auchli \cite{halpern_lauchli}]
	\label{Thm:HL}
	Let $0< d< \omega$, and let $\vec{T} = \langle T_0,\ldots,T_{d-1}\rangle$ be a sequence of trees. Let $r< \omega$, and suppose $\gamma\colon \bigotimes \vec{T}\to r$ is a coloring. Then there are an infinite $a\subseteq \omega$ and $a$-strong subtrees $S_i\subseteq T_i$ so that writing $\vec{S} = \langle S_0,\ldots,S_{d-1}\rangle$, we have that  $\bigotimes \vec{S}$ is monochromatic for $\gamma$.
\end{theorem}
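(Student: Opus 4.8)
The plan is to adapt Harrington's forcing proof, organizing it around a single \emph{one-step density lemma} together with a fusion that builds the strong subtrees level by level. First I would reduce the theorem to the following finitary statement: \emph{there is a color $c < r$ such that for every level $n < \omega$ and all finite sets $F_i \subseteq T_i(n)$ $(i<d)$, there exist a level $n' > n$ and, for each $i$, a map $e_i \colon F_i \to T_i(n')$ with $e_i(u) \geq_{T_i} u$, such that $\gamma(e_0(u_0), \ldots, e_{d-1}(u_{d-1})) = c$ for every $(u_0,\ldots,u_{d-1}) \in \prod_{i<d} F_i$.} Granting this, I would build the $a$-strong subtrees by recursion: having constructed $S_i(m) \subseteq T_i(a(m))$, apply the lemma to the finite sets $\is_{T_i}(S_i(m))$ to obtain a common level $a(m+1)$ and extensions whose images define $S_i(m+1)$, so that the full product $\prod_{i<d} S_i(m+1)$ receives color $c$. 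Since $c$ is fixed once and for all, the resulting $\bigotimes \vec{S}$ is monochromatic and $a = \{a(m) : m < \omega\}$ is infinite, as required.

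The core of the argument is the density lemma, which I would establish by forcing. Set $\kappa = (\beth_{d-1})^+$, so that by the Erd\H{o}s--Rado theorem $\kappa \to (\aleph_1)^d_{\aleph_0}$. Force with the finite-support product $\mathbb{P} = \prod_{i<d} \mathbb{P}_i$, where a condition in $\mathbb{P}_i$ is a finite partial function from $\kappa$ into a single level of $T_i$, extended by passing to higher levels along the tree order; the generic adds branches $b^i_\xi \in [T_i]$ for $\xi < \kappa$. In the ground model, to each increasing tuple $\bar\xi = (\xi_0,\ldots,\xi_{d-1}) \in [\kappa]^d$ I would attach a condition whose domain is contained in $\{\xi_0,\ldots,\xi_{d-1}\}$ that decides the color $\gamma(b^0_{\xi_0}(m), \ldots, b^{d-1}_{\xi_{d-1}}(m))$ of the induced product point at some level $m$, and record the finite pattern of tree-nodes this condition specifies together with that color. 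Since the trees are countable, this pattern ranges over a countable set, so $\bar\xi \mapsto (\text{pattern})$ is a coloring of $[\kappa]^d$ with $\aleph_0$ colors; applying $\kappa \to (\aleph_1)^d_{\aleph_0}$ yields a homogeneous $H \in [\kappa]^{\aleph_1}$ on which all this data agrees and a single color $c$ is forced. Mutual genericity of the branches indexed by $H$, together with perfectness of the trees (which guarantees that distinct indices eventually split), then lets me read off, inside the ground model, extensions of any prescribed finite sets $F_i$ whose product is monochromatic in color $c$, establishing the lemma.

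The main obstacle is precisely this final extraction step: arranging the attached patterns so that Erd\H{o}s--Rado genuinely applies with countably many colors (hence the restriction of each condition's domain to its own tuple), and then converting the forcing-theoretic statement ``the generic product has color $c$'' into the ground-model combinatorial conclusion, using that $\gamma$ and the relevant nodes lie in $V$ so that the forced color equals the actual color, while ensuring that a \emph{single} $c$ serves uniformly across all applications of the lemma in the fusion. I would also note an alternative route more in the spirit of this paper: by Theorem~A(3) the principle $\PG_d(\aleph_0)$ --- and hence, via the derivation given in Section~\ref{Sec:PGdef}, the $d$-dimensional Halpern--L\"auchli theorem --- holds after adding sufficiently many Cohen reals, and since for a fixed coloring $\gamma$ (coded as a real, as its domain $\bigotimes\vec{T}$ is countable) the conclusion of Theorem~\ref{Thm:HL} is $\Sigma^1_2$ in that real, Shoenfield absoluteness transfers it back to the ground model. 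This route replaces the delicate Erd\H{o}s--Rado bookkeeping with the analysis of the partition principle carried out later in the paper.
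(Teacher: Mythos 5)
Your overall architecture --- a one-step density lemma plus a level-by-level fusion, with the lemma established by forcing, or alternatively $\PG_d(\aleph_0)$ in a Cohen extension plus absoluteness --- matches the intended structure, and your closing alternative route is exactly the proof this paper gives (Theorem A(3) combined with Proposition~\ref{Prop:PGHL} and Shoenfield). But the primary argument has two genuine gaps. First, the density lemma as you state it is false: it must be localized to a cone. Take $d=1$, $T = {}^{<\omega}2$, and $\gamma(s) = s(0)$ for $|s|\geq 1$; for $F = \{\langle 0\rangle, \langle 1\rangle\} \subseteq T(1)$ no single color admits monochromatic extensions. The correct statement fixes nodes $t_i \in T_i$ and quantifies only over finite $F_i$ lying above $t_i$ (this is why the paper's principles speak of \emph{somewhere} dense sets, and why the proof of Proposition~\ref{Prop:PGHL} first extracts the $t_i$); the fusion must then be started inside these cones. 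This is fixable but must be built into the lemma and its proof.

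Second, and more seriously, the forcing half does not go through as described. A condition with domain contained in $\{\xi_0,\dots,\xi_{d-1}\}$ that ``decides the color at some level $m$'' carries no more information than a single ground-model point of $\bigotimes\vec{T}$ together with its color; homogenizing this data over $[\kappa]^d$ gives no way to produce, for given finite sets $F_i$, one common level $n'$ at which all $|F_0|\cdots|F_{d-1}|$ products are simultaneously colored $c$. To get that, each $q_{\bar\xi}$ must force that the set of good levels belongs to a named nonprincipal ultrafilter --- so that finitely many such sets have infinite intersection --- or, as in Section~\ref{Sec:PG}, one must name an arbitrary coloring of $\prod_{i<d}[T_i]$ and run a predensity argument. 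Either way the domains of the $q_{\bar\xi}$ are unavoidably larger than $\bar\xi$ itself, and then plain Erd\H{o}s--Rado for $d$-tuples with $\aleph_0$ colors no longer suffices: one must control how the domains $u_{\bar\xi}$ overlap across different tuples. The paper does this with the uniform $d$-dimensional $\Delta$-system lemma (Corollary~\ref{aleph_1_cor}); classical presentations of Harrington's proof instead color $2d$-tuples at the cost of raising $\kappa$ to $\beth_{2d-1}^+$. Your proposal has neither device, and the final ``read off in the ground model'' step --- which you rightly flag as the main obstacle --- is precisely where the work lies (the predensity claims and the recursive construction of the matrix $\langle \alpha_{i,k}\rangle$ in Section~\ref{Sec:PG}); invoking ``mutual genericity'' does not substitute for it.
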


We write $\mathsf{HL}$ for the statement of Theorem~\ref{Thm:HL} and $\mathsf{HL}_d$ for its restriction to sequences of trees of length at most $d$. 

\section{Polish space partition principles}
\label{Sec:PGdef}

While $\mathsf{HL}_1$ is trivial, proving $\mathsf{HL}_d$ by induction on $d$ is quite difficult. Good references for proofs along these lines are \cite{milliken} and \cite{stevo_book}. However, we draw attention to a proof due to Harrington (cf.\ \cite{todorcevic_farah}) using ideas from forcing. Let us begin by giving a very brief, high-level overview of the structure of Harrington's proof. Given $\gamma\colon \bigotimes\vec{T}\to r$ as in Theorem~\ref{Thm:HL}, one considers the poset $\bb{P}$ for adding a large number of Cohen reals, which are viewed as members of $[\bigotimes\vec{T}]$. Upon fixing a name $\dot{U}$ for a non-principal ultrafilter on $\omega$ and various names $\dot{b}$ for $\bb{P}$-generic branches, certain conditions $q_{\dot{b}}\in \bb{P}$ force that for $\dot{U}$-many levels, the corresponding node of the branch $\dot{b}$ is mapped by $\gamma$ to some color $i_{\dot{b}}< r$. By using the Erd\H{o}s-Rado theorem, we can find a rich collection of such $\dot{b}$ so that various properties of the corresponding $q_{\dot{b}}$ and $i_{\dot{b}}$ are the same. One then uses the conditions $q_{\dot{b}}$ to help build the subtrees $\vec{S}$ with $\bigotimes\vec{S}$ monochromatic for $\gamma$.

It is natural to attempt to remove some of the forcing formalism from these ideas. Namely, if $U\in \beta\omega\setminus \omega$ is a non-principal ultrafilter and if $\gamma\colon \bigotimes\vec{T}\to r$ is a coloring for some $r< \omega$, we can define a coloring $\gamma_{U}\colon \prod_{i< d} [T_i]\to r$ via $\gamma_{U}(\vec{x}) = j$ iff $\{m< \omega: \gamma(\vec{x}(m)) = j\}\in U$. Of course, the coloring $\gamma_{U}$ will typically have horrible definability properties, i.e.\ fail to have the Baire property. However, we can still attempt to reason about the possible Ramsey-theoretic properties of arbitrary colorings on products of Polish spaces. This line of thought naturally leads to the definition of 
the partition principles $\PG_d$, which, recalling Definition~\ref{Def:PG}, is the assertion 
that, for every positive $d < \omega$, any sequence $\langle X_0, \ldots, X_{d-1} \rangle$ 
of perfect Polish spaces, any $r < \omega$, and any coloring $\gamma:\prod_{i<d} X_i \to r$, 
there is a somewhere dense grid in $\prod_{i<d} X_i$ that is monochromatic for $\gamma$.

%\begin{definition}
%	\label{Def:PG}
%	Recall that a Polish space is \emph{perfect} if it contains no isolated points. Given $0< d< \omega$ and a sequence $\langle X_0,\ldots,X_{d-1}\rangle$ of perfect Polish spaces, a \emph{somewhere dense grid} is a subset of $\prod_{i< d} X_i$ of the form $\prod_{i<d} Y_i$, where each $Y_i\subseteq X_i$ is somewhere dense (in the ordinary topological sense).
%	
%	The \emph{Polish grid principle} in dimension $d$, denoted $\mathsf{PG}_d$, is the statement that for any sequence $\langle X_0,\ldots,X_{d-1}\rangle$ of perfect Polish spaces, any $r< \omega$ and any coloring $\gamma\colon \prod_{i< d} X_i\to r$, there is a monochromatic somewhere dense grid. The principle $\mathsf{PG}$ is the statement that $\mathsf{PG}_d$ holds for every $d< \omega$.
%\end{definition}

\begin{remark} 
	\label{Rem:GDB}
	Recall that every perfect Polish space contains a dense $G_\delta$ subspace homeomorphic to Baire space ${}^\omega \omega$. Therefore to show that $\mathsf{PG}_d$ holds, one may assume that each $X_i$ is the Baire space.
\end{remark}

\begin{proposition}
	\label{Prop:PGHL}
	$\mathsf{PG}_d$ implies $\mathsf{HL}_d$.
\end{proposition}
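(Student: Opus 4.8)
The plan is to derive $\mathsf{HL}_d$ from $\mathsf{PG}_d$ by converting a coloring $\gamma$ of the level product $\bigotimes \vec{T}$ into a coloring of the product of branch spaces $\prod_{i<d}[T_i]$, applying $\mathsf{PG}_d$ to obtain a monochromatic somewhere dense grid, and then extracting the desired strong subtrees from that grid. The key device for passing from a level coloring to a branch coloring is exactly the ultrafilter construction sketched in the text preceding the proposition: fix a non-principal ultrafilter $U \in \beta\omega \setminus \omega$ and define $\gamma_U \colon \prod_{i<d}[T_i] \to r$ by setting $\gamma_U(\vec{x}) = j$ iff $\{m < \omega : \gamma(\vec{x}(m)) = j\} \in U$. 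Since $U$ is an ultrafilter and $\gamma$ takes only finitely many values $r$, for each $\vec{x}$ exactly one $j < r$ satisfies this, so $\gamma_U$ is well-defined.

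\medskip

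Next I would apply $\mathsf{PG}_d$ to the coloring $\gamma_U$. Each $[T_i]$ is homeomorphic to Cantor space (by the standing assumptions on trees and the remark following Definition~\ref{Def:StrongSubtree}), hence a perfect Polish space, so the hypotheses of $\mathsf{PG}_d$ are met. We obtain somewhere dense sets $Y_i \subseteq [T_i]$ and a single color $j^* < r$ such that $\gamma_U$ is constantly $j^*$ on $\prod_{i<d} Y_i$. The next step is to translate ``somewhere dense in $[T_i]$'' into tree-theoretic data: each $Y_i$ being dense in some basic open set means there is a node $t_i \in T_i$ such that $Y_i$ meets every basic open neighborhood determined by a node extending $t_i$; equivalently, the downward closure of $Y_i$ above $t_i$ contains a perfect subtree. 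The goal is then to build, level by level and in lockstep over all coordinates simultaneously using a common infinite level set $a \subseteq \omega$, strong subtrees $S_i \subseteq T_i$ each of whose branches lies in (or can be chosen from) $Y_i$.

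\medskip

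The main obstacle, which is where the bulk of the argument lives, is reconciling the two different senses in which the construction is ``monochromatic.'' The grid gives that $\gamma_U$ is constant on branches $\vec{x} \in \prod_i Y_i$, meaning $\{m : \gamma(\vec{x}(m)) = j^*\} \in U$; but $\mathsf{HL}_d$ demands that $\gamma$ itself be constantly $j^*$ on every node $\vec{s} \in \bigotimes \vec{S}$ for the chosen levels $a$. The plan for bridging this gap is a careful inductive construction of the level set $a = \{a(m) : m < \omega\}$ together with the nodes of the $S_i$: having committed to the levels $a(0) < \cdots < a(m)$ and the corresponding partial subtrees, one must choose the next common level $a(m+1)$ and select immediate-successor extensions (within the dense sets $Y_i$) so that every new node-tuple receives color $j^*$ under $\gamma$. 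Here one uses that each point of $Y_i$ lies on a branch $x_i$ for which $U$-almost-every level has the right color, combined with a pigeonhole/finiteness argument over the finitely many node-tuples at level $a(m+1)$: because $U$ is nonprincipal and there are only finitely many tuples to control at each stage, the intersection of the relevant $U$-large sets is $U$-large and hence infinite, so a suitable common level $a(m+1)$ can always be found. I expect the delicate point to be organizing this so that the finitely many branch-choices at each coordinate can be extended coherently to honest $a$-strong subtrees (respecting the immediate-successor branching condition of Definition~\ref{Def:StrongSubtree}), rather than merely producing isolated monochromatic tuples.
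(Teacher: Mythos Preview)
Your proposal is correct and follows essentially the same approach as the paper's proof: define $\gamma_U$ via a nonprincipal ultrafilter, apply $\mathsf{PG}_d$ to get a monochromatic somewhere dense grid $\prod_i Y_i$, pass to nodes $t_i$ below which $Y_i$ is dense, and then inductively build the level set $a$ and the strong subtrees by, at each stage, choosing branches in $Y_i$ through the finitely many immediate successors and intersecting the finitely many resulting $U$-large sets to find the next level. The only imprecision is the phrase ``each of whose branches lies in $Y_i$'': the branches of the $S_i$ need not themselves be in $Y_i$; rather, at each stage one selects auxiliary branches from $Y_i$ through the current nodes solely to locate the next level $a(n)$, exactly as you describe in your final paragraph.
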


\begin{proof}
Let $\gamma\colon \bigotimes \vec{T}\to r$ be a coloring. Fix a non-principal ultrafilter $U\in \beta\omega\setminus \omega$, and form the coloring $\gamma_{U}\colon \prod_{i< d} [T_i]\to r$ defined before Remark~\ref{Rem:GDB}. Using $\mathsf{PG}_d$, find somewhere dense sets $Y_i\subseteq [T_i]$ so that $\gamma_{U}[\prod_{i< d} Y_i] = \{j\}$ for some $j< r$. As each $Y_i$ is somewhere dense, we can find $t_i\in T_i$ so that whenever $u\geq_{T_i} t_i$, there is $y\in Y_i$ with $u\in y$. By moving some of the $t_i$ further up if needed to place them all on the same level, we may assume that $(t_0,\ldots,t_{d-1}):= \vec{t}\in \bigotimes \vec{T}$. 

We now inductively construct an infinite $a\subseteq \omega$ and $a$-strong subtrees $S_i\subseteq T_i$ with $\bigotimes\vec{S}$ monochromatic for $\gamma$. To get started, for each $i< d$, pick some $y_i\in Y_i$ with $t_i\in y_i$. Setting $\vec{y} = (y_0,\ldots,y_{d-1})\in \prod_{i< d} Y_i$, we have $\gamma_U(\vec{y}) = j$. This means that $W_0 := \{m< \omega: \gamma(\vec{y}(m)) = j\}\in U$, so in particular is infinite. Pick some $a(0)\in W_0$ above $\ell(\vec{t})$, and for each $i< d$, we set $S_i(0) = \{y_i(a(0))\}$. 

Now suppose $n> 0$ and that for every $m< n$, both $a(m) < \omega$ and $S_i(m)\subseteq T_i(a(m))$ have been determined so that every $s\in S_i$ satisfies $s\geq_{T_i} t_i$. For each $u\in \is_{T_i}(S_i(n-1))$, there is some $y_u\in Y_i$ with $u\in y_u$. Set $Z_i := \{y_u: u\in \is_{T_i}(S_i(n-1))\}$. Because $Z_i\subseteq Y_i$, we have that $W_n := \bigcap \{\{m< \omega: \gamma(\vec{z}(m)) = j\}: \vec{z}\in \prod_{i< d} Z_i\}\in U$. Pick some $a(n)\in W_n$ with $a(n)> a(n-1)$, and for each $i< d$, set $S_i(n) = \{z(n): z\in Z_i\}$.
\end{proof}

We will show in our proof of Theorem A(3) in Section~\ref{Sec:PG} that one can force that $\mathsf{PG}$ is consistent by adding $\beth_\omega$-many Cohen reals. In particular, as $\mathsf{HL}$ is a $\bf{\Pi}^1_2$ statement, Shoenfield's absoluteness theorem then implies that $\mathsf{HL}$ is true in ZFC, yielding a new proof of $\mathsf{HL}$. In a sense, this proof re-interprets Harrington's forcing proof of $\mathsf{HL}$ by actually passing to the generic extension, whereas Harrington's proof can be phrased just in terms of combinatorics on the forcing poset.

Upon analyzing the proof of Proposition~\ref{Prop:PGHL}, it becomes clear that $\mathsf{PG}$ is actually stronger than what we need to prove $\mathsf{HL}$. We define two weakenings of $\mathsf{PG}$ which are still strong enough so that the proof of Proposition~\ref{Prop:PGHL} goes through. The first one is the weakest such principle and is in fact true in $\mathsf{ZFC}$.

\begin{definition}
	\label{Def:FPG}
	If $X$ is a topological space and $\mc{U}$ is a collection of open subsets of $X$, a \emph{$\mc{U}$-set} is any $Y\subseteq X$ which meets every member of $\mc{U}$.
	
	Given $0< d< \omega$ and a sequence $\langle X_0,\ldots,X_{d-1}\rangle$ of perfect Polish spaces, a \emph{finitary somewhere dense grid} is a subset $Y\subseteq \prod_{i< d} X_i$ so that for each $i< d$, there are open $U_i\subseteq X_i$ so that for every sequence $\langle \mc{U}_0,\ldots,\mc{U}_{d-1}\rangle$  with $\mc{U}_i$ a finite collection of non-empty open subsets of $U_i$, there is for each $i< d$ a $\mc{U}_i$-set $Y_i\subseteq X_i$ with $\prod_{i< d} Y_i\subseteq Y$. 
	
	The \emph{finitary Polish grid principle} $\mathsf{FPG}_d$ states that for any sequence $\langle X_0,\ldots,X_{d-1}\rangle$ of perfect Polish spaces, any $r< \omega$, and any coloring $\gamma\colon \prod_{i< d} X_i\to r$, there is a monochromatic finitary somewhere dense grid.  
\end{definition} 

In fact, one can show in ZFC that for any $d< \omega$, any sequence $\langle X_0,\ldots,X_{d-1}\rangle$ of perfect Polish spaces, any finitary somewhere dense grid $Y\subseteq \prod_{i< d} X_i$, and any finite coloring of $Y$\!, one of the colors contains a finitary somewhere dense grid. However, we won't say much more about $\mathsf{FPG}_d$;  to us at least, it seems that proofs of $\mathsf{FPG}_d$ either go through forcing the consistency of the much stronger $\mathsf{PG}_d$ or go through repeating many of the steps seen in combinatorial proofs of $\mathsf{HL}$.  

\iffalse
\begin{definition}
	Suppose that $T$ is a tree. A set $X \subseteq [T]$ is \emph{dense} in $[T]$ if, 
	for every $t \in T$, there is $b \in X$ such that $t \in b$. If $s \in T$, then 
	$X$ is \emph{$s$-dense} in $[T]$ if, for every $t \in T$ such that 
	$s \leq_T t$, there is $b \in X$ such that $t \in b$.
\end{definition}
\fi

In between $\mathsf{FPG}_d$ and $\mathsf{PG}_d$, we have our last principle, which was 
defined by the second author in \cite{hl2d}. 

\begin{definition}
	\label{Def:DDF}
	Given a sequence $\langle X_0,\ldots,X_{d-1}\rangle$ of perfect Polish spaces and $I\subseteq d$, we write $\pi_I\colon \prod_{i< d} X_i\to \prod_{i\in I} X_i$ for the natural projection map. Given some $Z\subseteq \prod_{i< d} X_i$ and some $x\in \prod_{i\in I} X_i$, we write $Z_x := \pi_{d\setminus I}[\{z\in Z: \pi_I(z) = x\}]$.  
	
	We define the notion of a set $Z\subseteq \prod_{i< d} X_i$ being a \emph{dense-by-dense filter}, DDF for short, by induction on $d> 0$. 
	\begin{enumerate}
		\item 
		$Z\subseteq X_0$ is a DDF set if it is dense. 
		\item
		Given $d> 0$, we have that $Z\subseteq \prod_{i\leq d} X_i$ is DDF if $\pi_d[Z]\subseteq \prod_{i< d} X_i$ is DDF and $\{Z_x: x\in \pi_d[Z]\}$ generates a filter of dense subsets of $X_d$.
	\end{enumerate}
	We say that $Z\subseteq \prod_{i< d} X_i$ is \emph{somewhere-DDF} if for some non-empty open sets $U_i\subseteq X_i$, $Z$ is DDF as a subset of $\prod_{i < d} U_i$.

	The principle $\mathsf{DDF}_d$ states that for any sequence $\langle X_0,\ldots,X_{d-1}\rangle$ of perfect Polish spaces, any $r< \omega$, and any coloring $\gamma\colon \prod_{i< d} X_i\to r$, there is a monochromatic somewhere-DDF subset. 
\end{definition}

Since a somewhere dense grid is clearly somewhere-DDF, it is clear that $\PG_d$ implies 
$\DDF_d$. The next proposition shows that a witness to $\DDF_d$ is also a witness to 
$\mathsf{FPG}_d$.

\begin{proposition}
	\label{Prop:DDFImpliesFPG}
	Given $0< d< \omega$ and $\langle X_0,\ldots,X_{d-1}\rangle$ a sequence of perfect Polish spaces, if $Z\subseteq \prod_{i< d} X_i$ is somewhere-DDF, then $Z$ is also a finitary somewhere dense grid.
\end{proposition}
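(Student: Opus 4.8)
The plan is to prove the statement by induction on $d$, using at each level the same open sets $\langle U_0,\ldots,U_{d-1}\rangle$ that witness somewhere-DDF-ness (suitably shrunk as dictated by the inductive hypothesis) as the open sets demanded in the definition of a finitary somewhere dense grid. The feature that makes everything fit together is that the finitary grid only asks us to meet \emph{finitely many} open sets in each coordinate, so we will only ever need to build \emph{finite} sets $Y_i$. This finiteness is exactly what lets the DDF hypothesis be applied, since the DDF condition guarantees density only of \emph{finite} intersections of fibers.

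For the base case $d=1$, being somewhere-DDF means there is a nonempty open $U_0$ in which $Z$ is dense; given a finite family $\mc{U}_0$ of nonempty open subsets of $U_0$, density lets us pick $z_V \in Z \cap V$ for each $V \in \mc{U}_0$, and $Y_0 := \{z_V : V \in \mc{U}_0\}$ is a finite $\mc{U}_0$-set contained in $Z$. For the inductive step, suppose $Z \subseteq \prod_{i \le d} U_i$ is DDF, so $\pi_d[Z]$ is DDF in $\prod_{i<d} U_i$ and the fibers $\{Z_x : x \in \pi_d[Z]\}$ generate a filter of dense subsets of $U_d$. Applying the inductive hypothesis to $\pi_d[Z]$ supplies witnessing open sets $\tilde U_i \subseteq U_i$ $(i<d)$ exhibiting it as a finitary somewhere dense grid; I will use $\langle \tilde U_0,\ldots,\tilde U_{d-1}, U_d\rangle$ as the witness for $Z$. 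Fixing finite families $\mc{U}_i$ of nonempty open subsets of $\tilde U_i$ $(i<d)$ and of $U_d$ $(i=d)$, the inductive hypothesis applied to $\mc{U}_0,\ldots,\mc{U}_{d-1}$ produces $\mc{U}_i$-sets $Y_i$ with $\prod_{i<d} Y_i \subseteq \pi_d[Z]$; after discarding points we may assume each $Y_i$ is finite, so $F := \prod_{i<d} Y_i$ is a finite subset of $\pi_d[Z]$.

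The crux is then the last coordinate. Because $F \subseteq \pi_d[Z]$ is finite, the set $D := \bigcap_{x \in F} Z_x$ is a finite intersection of generators, hence belongs to the filter generated by $\{Z_x : x \in \pi_d[Z]\}$ and is therefore dense in $U_d$. For each $V \in \mc{U}_d$ choose $y_V \in D \cap V$ (nonempty by density) and put $Y_d := \{y_V : V \in \mc{U}_d\}$, a $\mc{U}_d$-set. For any $x \in F$ and $y \in Y_d$ we have $y \in D \subseteq Z_x$, which by the definition of the fiber means $(x,y) \in Z$; thus $\prod_{i \le d} Y_i \subseteq Z$, establishing that $Z$ is a finitary somewhere dense grid and completing the induction.

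The only genuine subtlety, and the step I expect to require the most care to state cleanly, is the bookkeeping that keeps the sets $Y_i$ finite, since this is precisely what allows $D$ to be a finite intersection of fibers and hence dense. This should be dispatched by the simple observation that any $\mc{U}_i$-set may be thinned to a finite $\mc{U}_i$-set by retaining a single witness in each of the finitely many members of $\mc{U}_i$, and that shrinking $Y_i$ preserves the containment $\prod_{i<d} Y_i \subseteq \pi_d[Z]$.
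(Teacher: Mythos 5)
Your proposal is correct and follows essentially the same route as the paper: induction on $d$, reusing the DDF-witnessing open sets, thinning the inductively obtained $\mc{U}_i$-sets to finite ones, and intersecting the corresponding finitely many fibers to get a dense (hence $\mc{U}_d$-meeting) set in the last coordinate. The only cosmetic difference is that the paper strengthens the induction statement so that the same $U_i$ witness both properties, avoiding your auxiliary $\tilde U_i$, but this does not change the substance of the argument.
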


\begin{proof}
	We prove by induction on $d$ that if $U_i\subseteq X_i$ are non-empty open sets with $Z\subseteq \prod_{i< d} U_i$ a DDF subset, then the $U_i$ also witness that $Z$ is a finitary somewhere dense grid. For $d = 1$ this is clear. Now suppose the result is known for dimension $d$. Let $\langle X_0,\ldots,X_d\rangle$ be perfect Polish spaces, and suppose $Z\subseteq \prod_{i\leq d} U_d$ is DDF. Fix $\langle \mc{U}_0,\ldots,\mc{U}_d\rangle$ with each $\mc{U}_i$ a finite collection of non-empty open subsets of $U_i$. As $\pi_d[Z]\subseteq \prod_{i< d} X_i$ is DDF, then by induction we can find $\mc{U}_i$-sets $Y_i\subseteq U_i$ with $\prod_{i<d} Y_i\subseteq Z$. Find some finite $F_i\subseteq Y_i$ which is also a $\mc{U}_i$-set. As $\{Z_x: x\in \pi_d[Z]\}$ generates a filter of dense subsets of $U_d$, the set $Y_d = \bigcap \{Z_y: y\in \prod_{i< d} F_i\}\subseteq U_d$ is dense, so in particular is a $\mc{U}_d$-set. As $F_0\times\cdots\times F_{d-1}\times Y_d\subseteq Z$, we see that $Z$ is a finitary somewhere dense grid.
\end{proof}

This principle serves as an interesting middle ground between $\mathsf{FPG}_d$ and $\mathsf{PG}_d$; it is rich enough in that one can use non-combinatorial tools to investigate it, yet weak enough that one might hope to prove it in ZFC. Indeed, we have the following.

\begin{proposition}
	\label{Prop:2DDF}
	$\mathsf{DDF}_2$ is true.
\end{proposition}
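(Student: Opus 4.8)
The plan is to prove $\mathsf{DDF}_2$ by reducing to the case where both spaces are Baire space (via Remark~\ref{Rem:GDB}) and then constructing a monochromatic somewhere-DDF set directly by an unfolding/fusion argument over the coloring $\gamma\colon \prod_{i<2} X_i \to r$. By Definition~\ref{Def:DDF}, I need to find nonempty open $U_0 \subseteq X_0$ and $U_1 \subseteq X_1$, a single color $j < r$, and a set $Z \subseteq U_0 \times U_1$ monochromatic of color $j$ such that $\pi_1[Z]$ is dense in $U_0$ and such that for each $x \in \pi_1[Z]$ the fiber $Z_x \subseteq U_1$ is dense, with these fibers generating a \emph{filter} of dense sets. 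The crucial and somewhat subtle requirement is the filter condition: finite intersections of the fibers $Z_x$ must remain dense. So the heart of the matter is to arrange that after fixing color $j$, for every finite set $F \subseteq \pi_1[Z]$ the set $\bigcap_{x \in F} Z_x$ is still dense in $U_1$.

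First I would set up the natural candidate. For each $x \in X_0$ and each color $j < r$, let $D^j_x = \{y \in X_1 : \gamma(x,y) = j\}$ be the fiber of color $j$ above $x$. The idea is to find a single color $j$, a shrinking to some basic open $U_0 \times U_1$, and a dense set $A \subseteq U_0$ of ``good'' first coordinates so that each $D^j_x$ (for $x \in A$) is dense in $U_1$ \emph{and} finite intersections remain dense; then $Z = \bigcup_{x \in A} (\{x\} \times D^j_x)$ is the desired DDF set. To find such good first coordinates, I would fix an enumeration $\{V_n : n < \omega\}$ of a countable base for $X_1$ and, for each $x$, record the set $S(x) = \{(j,n) : D^j_x \text{ is dense in } V_n\}$ of pairs recording which colors are dense in which basic opens above $x$. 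Since there are only countably many possible values of $S(x)$, one of these values, say $S^*$, occurs on a non-meager (indeed somewhere dense, in fact comeager on some $U_0$ after a category argument) set of $x$'s; shrinking $X_0$ to such a $U_0$ lets me assume $S(x) = S^*$ for all $x$ in a dense subset of $U_0$.

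The key combinatorial observation is then that for a fixed $x \in U_0$, as $y$ ranges over a given basic open $V_n \subseteq X_1$, at least one color $j$ has $D^j_x$ dense in $V_n$ (since the $r$ color-fibers cover $V_n$ and a finite union of nowhere-dense-in-$V_n$ sets cannot cover the open set $V_n$). Combined with $S(x) = S^*$ being constant, this gives a uniform color $j^*$ and a basic open $U_1 = V_{n^*}$ such that $D^{j^*}_x$ is dense in $U_1$ for every $x$ in the dense set $A \subseteq U_0$. To upgrade this to the filter condition, I would apply the same pigeonhole one level up: for a finite $F \subseteq A$ and a basic open $W \subseteq U_1$, I want $\bigcap_{x \in F} D^{j^*}_x$ dense in $W$. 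This does \emph{not} follow automatically from each $D^{j^*}_x$ being individually dense, which is exactly where the main obstacle lies --- I expect to need a more careful selection of $A$, choosing its elements one at a time along a generic/fusion sequence so that each newly added $x$ has $D^{j^*}_x$ dense relative to every open set in a countable collection that already incorporates finite intersections built from previously chosen points.

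The technically delicate step, and the one I anticipate being the real work, is this filter/fusion bookkeeping: I would build $A = \{x_k : k < \omega\}$ recursively, maintaining as an invariant that for every finite $G \subseteq \{x_0,\ldots,x_{k-1}\}$ and every basic open $W \subseteq U_1$, the set $\bigcap_{x \in G} D^{j^*}_x$ is dense in $W$; then when choosing $x_k$ I must pick it (from a prescribed dense set of first coordinates, to guarantee $\pi_1[Z]$ is dense in $U_0$) so that adjoining $x_k$ preserves this invariant for all the finitely many new finite sets $G \ni x_k$. That each such choice is possible should again reduce to the elementary fact that $r$-many color classes cannot all be nowhere dense in a fixed open subset of $U_1$, applied now to the residual coloring restricted to the dense set $\bigcap_{x \in G \setminus \{x_k\}} D^{j^*}_x$; the point is that \emph{density is preserved under intersecting with} another dense color-fiber provided we always have room to maneuver, which the $d=2$ structure --- only one ``fiber'' direction to control --- makes tractable, and is precisely why the argument works for $\mathsf{DDF}_2$ but does not obviously generalize to higher $d$. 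Once $A$ and $j^*$ are constructed, setting $Z = \bigcup_{x \in A}(\{x\} \times D^{j^*}_x)$ (or a dense subcollection of each fiber) verifies all three clauses of Definition~\ref{Def:DDF} and completes the proof that $\mathsf{DDF}_2$ holds in $\mathsf{ZFC}$.
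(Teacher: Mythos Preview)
Your proposal has two genuine gaps, one minor and one fundamental.

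The minor gap is in the stabilization step: you claim there are only countably many possible values of $S(x) = \{(j,n) : D^j_x \text{ is dense in } V_n\}$, but $S(x)$ is an arbitrary subset of the countable set $r \times \omega$, so there are $2^{\aleph_0}$ possibilities. A Baire category pigeonhole does not give a non-meager level set from a partition into continuum-many pieces.

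The fundamental gap is in the filter construction. You want to fix the color $j^*$ \emph{before} building the sequence $\langle x_k\rangle$, and then at each stage choose $x_k$ so that $D^{j^*}_{x_k} \cap Y_{k-1}$ is dense in $U_1$, where $Y_{k-1} = \bigcap_{i<k} D^{j^*}_{x_i}$. Your justification is that ``$r$-many color classes cannot all be nowhere dense'' in $Y_{k-1}$. But that only tells you that for each candidate $x$, \emph{some} color has its fiber dense in $Y_{k-1}$; it does not say that the pre-chosen color $j^*$ does. Your invariant $S(x)=S^*$ records density of $D^j_x$ in basic opens of $X_1$, not in the shrinking dense sets $Y_{k-1}$, so it gives no leverage here. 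It is entirely possible that for every $x$ in the required basic open of $U_0$, the set $D^{j^*}_x \cap Y_{k-1}$ is nowhere dense, and your construction halts.

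The paper's proof handles exactly this obstruction by \emph{not} fixing the color in advance. It proceeds by induction on $r$, proving a slightly stronger statement in which $X_0$ is merely a non-meager subset of a perfect Polish space and $X_1$ is merely somewhere dense. One attempts your decreasing-fiber construction for the single color $r-1$: pick $x_n$ in the $n$-th basic open with $Z_{x_n}\cap Y_n$ dense, and set $Y_{n+1}=Z_{x_n}\cap Y_n$. If this succeeds for all $n$, done. If it fails at stage $k$, then for every $x$ in the $k$-th basic open there is a basic open $V_{n_x}$ of $X_1$ with $Z_x\cap Y_k\cap V_{n_x}=\emptyset$; some fixed $n$ occurs for a non-meager set $W$ of $x$'s, and on $W\times (Y_k\cap V_n)$ the color $r-1$ has been eliminated. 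The strengthened induction hypothesis then applies to this $(r-1)$-coloring. The ``try, and if you fail, drop a color'' mechanism is precisely what your argument is missing.
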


\begin{proof}
	For reasons that will be clear at the end of the proof, we prove something slightly stronger. Let $X_0$ be a non-meager subset of some perfect Polish space $\tilde{X}_0$, and let $X_1$ be a somewhere dense subset of some perfect Polish space $\tilde{X}_1$. By zooming in to the relevant open sets, we may assume that $X_0\subseteq \tilde{X}_0$ is nowhere meager and that $X_1\subseteq \tilde{X}_1$ is dense. Let $r< \omega$, and let $\gamma\colon X_0\times X_1\to r$ be a coloring. We will prove that there is a somewhere-DDF subset of $X_0 \times X_1$ that is monochromatic for $\gamma$.
	
	The proof is by induction on $r$. It is trivial if $r=1$, so assume that $r>1$. We attempt to find a DDF subset inside $Z := \gamma^{-1}(\{r-1\})$. Let $\{U_n: n< \omega\}$ be a basis for $X_0$. We inductively attempt to build a decreasing collection $\{Y_n: n< \omega\}$ of dense subsets of $X_1$ and a dense subset $\{x_n : n < \omega\}$ of $X_0$ as follows. Set $Y_0 = X_1$. If $n< \omega$ and $Y_n$ has been determined, pick any $x_n\in U_n$ such that $Z_{x_n}\cap Y_n\subseteq X_1$ is dense, and set $Y_{n+1} := Z_{x_n}\cap Y_n$. If we can do this for every $n< \omega$, then $\bigcup_{n< \omega} \{x_n\}\times Y_{n+1}\subseteq Z$ is a DDF set. 
	
	Suppose for some $k< \omega$ that we fail to construct $x_k$ and $Y_{k+1}$. Let $\{V_n: n< \omega\}$ be a basis for $X_1$. For every $x\in U_k$, there is some $n_x< \omega$ so that $Z_x\cap Y_k\cap V_{n_x} = \emptyset$. For some $n< \omega$, the set $W:= \{x\in U_k: n_x = n\}$ is non-meager. Then $W\subseteq \tilde{X}_0$ is non-meager, $Y_k\cap V_n\subseteq \tilde{X}_1$ is somewhere dense, and $\gamma$ attains one fewer color on $W\times (Y_k\cap V_n)$. We can therefore apply the induction hypothesis to $\gamma \restriction W \times (Y_k\cap V_n)$ to obtain a monochromatic somewhere-DDF subset of $W \times (Y_k \cap V_n)$, and hence also of $X_0 \times X_1$.
\end{proof}

Using similar ideas, one can also say something about the stronger $\mathsf{PG}_2$ principle.

\begin{proposition}
	\label{Prop:PG2True2Colors}
	$\mathsf{PG}_2$ restricted to $2$-colorings is true.
\end{proposition}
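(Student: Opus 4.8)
The plan is to imitate the proof of Proposition~\ref{Prop:2DDF}, taking advantage of the fact that with only two colors a failure to build a dense fiber-intersection in one color is automatically a positive instance of the other color. By Remark~\ref{Rem:GDB} I may assume $X_0 = X_1 = {}^\omega\omega$. For $c < 2$ write $Z^c := \gamma^{-1}(c)$, and for $x \in X_0$, $y \in X_1$ put $Z^c_x := \{y' \in X_1 : \gamma(x,y') = c\}$ and $(Z^c)^y := \{x' \in X_0 : \gamma(x',y) = c\}$. The first observation is that a monochromatic somewhere dense grid in color $c$ is exactly a somewhere dense $Y_0 \subseteq X_0$ together with a somewhere dense $Y_1 \subseteq \bigcap_{x \in Y_0} Z^c_x$. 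Thus it suffices to produce a color $c$ and a somewhere dense $Y_0$ for which $\bigcap_{x \in Y_0} Z^c_x$ is somewhere dense.

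The engine is a two-color dichotomy run inside a nonempty basic open box $U \times V$. First I would attempt, exactly as in Proposition~\ref{Prop:2DDF}, to build a sequence $\langle x_n : n < \omega \rangle$ dense in $U$ together with a decreasing sequence of dense-in-$V$ sets $Y_n$, where $Y_0 = V$ and $Y_{n+1} = Z^1_{x_n} \cap Y_n$, choosing at stage $n$ some $x_n$ in the $n$-th basic subset of $U$ with $Z^1_{x_n} \cap Y_n$ dense in $V$. If at some stage no such $x_n$ exists, then for every $x$ in that basic open set there is, since there are only two colors, a basic $V' \subseteq V$ with $Z^1_x \cap Y_n \cap V' = \emptyset$; that is, $\gamma(x, \cdot) \equiv 0$ on the dense set $Y_n \cap V'$. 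Pigeonholing the countably many choices of $V'$ against the non-meagerness of the basic open set yields a non-meager $W$ and a single $V'$ with $W \times (Y_n \cap V') \subseteq Z^0$, an honest monochromatic grid in color $0$. This pigeonhole is the one and only place the restriction to two colors is used, and it is what prevents the argument from extending to more colors.

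If instead the construction succeeds at every stage, I obtain $\{x_n\}$ dense in $U$ and $Y_\infty := \bigcap_n Y_n$ with $\{x_n\} \times Y_\infty \subseteq Z^1$, since any $y \in Y_\infty$ lies in each $Y_{n+1} = Z^1_{x_n} \cap Y_n$ and hence has $\gamma(x_n, y) = 1$. So if $Y_\infty$ is somewhere dense we win in color $1$. The difficulty — and the whole gap between $\DDF_2$ and $\PG_2$ — is that a decreasing sequence of dense sets may have nowhere dense intersection. To force $Y_\infty$ to be somewhere dense it is enough to demand at each stage that $Z^1_{x_n}$ be \emph{comeager} in $V$ rather than merely densely meeting $Y_n$, for then every $Y_n$, and hence $Y_\infty$, is comeager in $V$. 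Concretely, set $S^c_V := \{x \in X_0 : Z^c_x \text{ is comeager in } V\}$; if for some $c$ and some basic $V$ the set $S^c_V$ is somewhere dense, say dense in an open $U'$, then taking $Y_0$ to be a countable dense subset of $S^c_V \cap U'$ makes $\bigcap_{x \in Y_0} Z^c_x$ comeager in $V$, yielding the grid at once. This disposes of every coloring for which some $S^c_V$ is somewhere dense.

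The hard part will be the remaining ``everywhere mixing'' case, in which $S^c_V$ is nowhere dense for both colors $c$ and every basic $V$. Here $\bigcup_{c,V} S^c_V$ is meager, so a comeager set of columns $x$ have both $Z^0_x$ and $Z^1_x$ dense in every basic open set; no single fiber is ever comeager, the comeager shortcut is unavailable, and this is precisely the configuration (e.g.\ all fibers Bernstein) that $\mathsf{FPG}_2$ and $\DDF_2$ cannot tell apart from a grid. I expect to handle it by organizing the entire argument as a recursion: one runs the dense-fiber construction of the second paragraph, and whenever it succeeds but returns a nowhere dense $Y_\infty$, one extracts from the deletion stage of each point of $V \setminus \overline{Y_\infty}$ a non-meager piece of $V$ on which some single column realizes color $0$, then recurses into a strictly smaller box carrying this color-$0$ information (with the roles of the two colors swapped at each step). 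The crux, and the step I expect to require the most care, is to equip this recursion with a countable rank that must strictly descend, so that it cannot persist through all of $\omega_1$; at the bottom one of the two base cases — a somewhere dense $S^c_V$, or the honest product grid delivered by the two-color pigeonhole — is forced to fire.
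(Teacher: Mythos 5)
Your reduction, your dichotomy at each stage, and your two-color pigeonhole (failure to extend in color $1$ on some basic box yields, after pigeonholing over a countable basis, a non-meager $W$ and an open $V'$ with $W \times (Y_n \cap V') \subseteq Z^0$) all match steps that appear in the paper's argument. But your proof is not complete: the ``everywhere mixing'' case in your final paragraph is exactly the hard case, and what you offer there --- a transfinite recursion equipped with a countable rank that must strictly descend --- is only a declaration of intent, with no definition of the rank and no argument that it descends. Since a coloring can easily have every fiber $Z^0_x$ and $Z^1_x$ dense and codense in every open set (so that every $S^c_V$ is empty and your comeager shortcut never fires), your argument as written proves nothing beyond what Proposition~\ref{Prop:2DDF} already gives.

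The idea you are missing is that one should \emph{not} try to realize the sufficient condition of your first paragraph by first fixing $Y_0$ and then controlling $\bigcap_{x \in Y_0} Z^1_x$; that forces you to confront an infinite intersection of fibers, which is why you are driven to comeagerness and then to the unresolved mixing case. The paper instead builds $Y_0 = \{x^0_n\}$ and $Y_1 = \{x^1_n\}$ \emph{simultaneously}, alternating sides, while maintaining two $\subseteq$-decreasing sequences of \emph{nowhere meager} reservoirs $V^0_n, V^1_n$ with $\{x^0_m : m < n\} \times V^1_n \subseteq Z_1$ and $V^0_n \times \{x^1_m : m < n\} \subseteq Z_1$. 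At stage $n$ one only needs to find a single point $x$ in $V^0_n \cap U^0_n$ such that $V^1_n \cap (Z_1)_x$ is still nowhere meager (and symmetrically on the other side); if no such point exists, your pigeonhole argument produces the color-$0$ grid. Because each $x^1_n$ is chosen inside $V^1_{n+1}$, it automatically lies in $(Z_1)_{x^0_m}$ for all $m \leq n$, and each later $x^0_m$ lies in $V^0_m \subseteq (Z_1)^{x^1_n}$, so every pair lands in $Z_1$ without any infinite intersection of fibers ever being taken --- each point need only survive the finitely many constraints imposed by the points chosen before it. This interleaving is precisely what closes the gap between $\DDF_2$ and $\PG_2$ for two colors, and it eliminates the need for your $S^c_V$ analysis and the rank recursion entirely.
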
 

\begin{proof}
	Let $X_0$ and $X_1$ be perfect Polish spaces, and let $\gamma:X_0 \times X_1 \rightarrow 
	2$ be a coloring. We will attempt to construct a dense grid inside $Z_1 := \gamma^{-1}
	(\{1\})$ and will show that if any step of the construction fails, then we can find a 
	somewhere dense grid inside $Z_0 := \gamma^{-1}(\{0\})$. Given $j < 2$ and $x \in X_0$, 
	let $(Z_j)_x := \{y \in X_1 : (x,y) \in Z_j\}$. Similarly, given $y \in X_1$, let 
	$(Z_j)^y := \{x \in X_0 : (x,y) \in Z_j\}$.
	
	For $i < 2$, let $\{U^i_n:n<\omega\}$ be a basis for $X_i$. 
	We attempt to build sets $Y_i = \{x^i_n : n < \omega\}$ for $i < 2$ such that
	$Y_0 \times Y_1 \subseteq Z_1$ and, for all $n < \omega$ and $i < 2$, we have 
	$x^i_n \in U^i_n$. During the construction, we will also construct $\subseteq$-decreasing 
	sequences $\langle V^i_n : n < \omega \rangle$ of nowhere meager subsets of $X_i$ with 
	the property that, for all $n < \omega$, both $\{x^0_m : m < n\} \times V^1_n$ and 
	$V^0_n \times \{x^1_m : m < n\}$ are subsets of $Z_1$.
	
	Begin by letting $V^i_0 = X_i$ for $i < 2$. Now suppose that $n < \omega$ and we have 
	chosen $\langle x^i_m : m < n \rangle$ and $\langle V^i_m : m \leq n \rangle$. We will 
	choose $x^0_n \in V^0_n \cap U^0_n$ and a nowhere meager set $V^1_{n+1} \subseteq 
	V^i_n$, and then we will choose $x^1_n \in V^1_{n+1} \cap U^1_n$ and a nowhere meager set 
	$V^0_{n+1} \subseteq V^0_n$.
	
	If we are able to find $x \in V^0_n \cap U^0_n$ such that $V^1_n \cap (Z_1)_x$ is 
	nowhere meager, then let $x^0_n$ be such an $x$ and let $V^1_{n+1}:= V^1_n \cap 
	(Z_1)_{x_n}$. Suppose momentarily that we were unable to find such an $x$. Then, for every 
	$x \in V^0_n \cap U^0_n$, there is $k_x < \omega$ such that 
	$V^1_n \cap (Z_1)_x \cap U^1_{k_x}$ is meager. Then there is a fixed $k < \omega$ and a 
	non-meager set $W_0 \subseteq V^0_n \cap U^0_n$ such that $k_x = k$ for all $x \in W_0$. 
	Let $W_0'$ be a countable somewhere dense subset of $W_0$. Then $V^1_n \cap U^1_k \cap 
	\bigcup\{(Z_1)_x : x \in W_0'\}$ is meager, so $W_1 := V^1_n \cap U^1_k \cap \bigcap
	\{(Z_0)_x : x \in W_0'\}$ is nonmeager. In particular, $W_1$ is somewhere dense, so 
	$W_0' \times W_1$ is a somewhere dense grid contained in $Z_0$. 
	
	We can therefore assume that we were able to construct $x^0_n$ and $V^1_{n+1}$ and 
	continue to the second half of step $n$ of the construction, where a symmetric argument 
	shows that we can either find 
	\begin{enumerate}
	  \item $y \in V^1_{n+1}$ such that $V^0_n \cap (Z_1)^y$ is nowhere meager; or
	  \item a somewhere dense grid contained in $Z_0$.
	\end{enumerate}
	We can therefore again assume we are in case (1), let $x^1_n$ be such a $y$, and let 
	$V^0_{n+1} := V^0_n \cap (Z_1)^{x^1_n}$.
	
	At the end of the construction, we have produced a dense grid $Y_0 \times Y_1$. To 
	see that it is a subset of $Z_1$, fix $m,n \leq \omega$. If $m \leq n$, then we 
	ensured that $x^1_n \in V^1_{n+1} \subseteq V^1_{m+1}$, and hence $(x^0_m,x^1_n) \in 
	Z_1$. Similarly, if $m > n$, then we ensured that $x^0_m \in V^0_m \subseteq V^0_{n+1}$, 
	so again $(x^0_m,x^1_n) \in Z_1$.
\end{proof}

It remains open whether the full $\mathsf{PG}_2$ is true in ZFC. For $d> 2$, we will show in Theorem~\ref{Thm:DDFbad} that $\mathsf{DDF}_d$, even restricted to $2$-colorings, implies that $\mathfrak{c}\geq \aleph_{d-1}$.

\section{Consistent failure of $\mathsf{DDF}_d$}
\label{Sec:DDFbad}

\begin{definition}
	Let $S$ be an infinite set, and suppose that $\mc H$ is a collection of subsets 
	of $S$. We say that $\mc H$ is \emph{weakly partition regular} if for every finite 
	partition $S = \bigcup_{j < k} P_j$ of $S$, there is $j < k$ such that 
	$P_j \in \mc H$
	
	Given an infinite cardinal $\kappa$, we say that $\mc H$ is 
	\emph{weakly $\kappa$-partition regular} if for every 
	partition $S = \bigcup_{\eta < \kappa} P_\eta$ of $S$ into $\kappa$-many parts, 
	there is $\eta < \kappa$ such that $P_\eta \in \mc H$.
	
	A further weakening of this notion will be useful for us. Given an infinite 
	regular cardinal $\kappa$, we say that $\mc H$ is \emph{weakly $\kappa$-partition 
		subregular} if for every partition $S = \bigcup_{\eta < \kappa} P_\eta$ of 
	$S$ into $\kappa$-many parts, there is $\xi < \kappa$ such that 
	$\bigcup_{\eta < \xi} P_\eta \in \mc H$.
\end{definition}

\begin{remark} \label{coloring_remark}
	It will often be more convenient to phrase these partition regularity properties in terms 
	of colorings instead of partitions. For instance, a collection $\mc H$ of subsets of 
	a set $S$ is weakly $\kappa$-partition subregular if, for every set $X$ of cardinality $\kappa$ 
	and every coloring $c:S \rightarrow X$, there is $Y \in \mc H$ such that 
	$|c[Y]| < \kappa$. Throughout, we shall interchangeably use the partition and 
	coloring formulations without explicit comment.
\end{remark}

Given $0< d< \omega$ and $\vec{X} = \langle X_0,..., X_{d-1}\rangle$ a sequence of perfect Polish spaces, let $\mathsf{DDF}(\vec{X})$ denote the set of subsets of $\prod_{i < d} X_i$ which contain a somewhere-DDF subset. So the principle $\mathsf{DDF}_d$ says that for every such $\vec{X}$, the collection $\mathsf{DDF}(\vec{X})$ is weakly partition regular. Now consider another perfect Polish space $X_d$. We want to consider how partition properties of $\mathsf{DDF}(\vec{X})$ affect those of $\mathsf{DDF}(\vec{X}^\frown X_d)$.

\begin{proposition} \label{sideways_prop}
	Fix $d \geq 2$ and let $\vec{X}$ and $X_d$ be as above. 
	If $\mathsf{DDF}(\vec{X})$ is not weakly $\omega$-partition subregular, 
	then there is a $2$-coloring of $\prod_{i\leq d} X_i$ witnessing that $\mathsf{DDF}(\vec{X}^\frown X_d)$ is not weakly partition regular.
\end{proposition}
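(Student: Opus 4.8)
The plan is to first restate the hypothesis in a usable form and then construct an explicit $2$-coloring. Unwinding Remark~\ref{coloring_remark} with $\kappa = \omega$, the assumption that $\mathsf{DDF}(\vec{X})$ is \emph{not} weakly $\omega$-partition subregular yields a coloring $c\colon \prod_{i<d} X_i \to \omega$ with the property that $c$ is unbounded on every somewhere-DDF subset of $\prod_{i<d} X_i$: if $c$ were bounded on some somewhere-DDF set $Z$, say $Z \subseteq \{x : c(x) < k\}$, then $\{x : c(x) < k\}$ would lie in $\mathsf{DDF}(\vec{X})$ and witness subregularity. My goal is then to build $\gamma\colon \prod_{i\le d} X_i \to 2$ so that neither $\gamma^{-1}(0)$ nor $\gamma^{-1}(1)$ contains a somewhere-DDF subset; such a $\gamma$ is precisely a $2$-coloring witnessing that $\mathsf{DDF}(\vec{X}^\frown X_d)$ is not weakly partition regular.

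The key idea is to use the value $c(x)$ to select a \emph{scale} in $X_d$ and to read off a single binary digit of $y$ at that scale. Since $X_d$ is perfect Polish, I would fix a compatible complete metric and build by recursion a dyadic scheme $\langle C_s : s \in 2^{<\omega}\rangle$ of non-empty open subsets of $X_d$ with $C_\emptyset = X_d$, with $C_{s^\frown 0} \cap C_{s^\frown 1} = \emptyset$, with $C_{s^\frown 0} \cup C_{s^\frown 1}$ dense in $C_s$, and with $\mathrm{diam}(C_s) \le 2^{-|s|}$ for $|s| \ge 1$. For $n < \omega$ and $i < 2$, set $P^i_n := \bigcup\{C_s : |s| = n+1,\ s(n) = i\}$, so that $P^0_n$ and $P^1_n$ are disjoint open sets with dense union. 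Finally define $\gamma(x,y) = i$ if $y \in P^i_{c(x)}$, and $\gamma(x,y) = 0$ otherwise. The crucial feature, which I would record as a lemma, is a \emph{sweeping property}: for every non-empty open $U \subseteq X_d$ there is $n_U < \omega$ such that for all $n \ge n_U$ and each $i < 2$ there is a non-empty open $W \subseteq U$ with $W \subseteq P^i_n$. This holds because some node $C_{s_0}$ satisfies $C_{s_0} \subseteq U$ (by the shrinking diameters and the density of each level-union), after which every descendant $C_s$ with $s \supseteq s_0$ also lies in $U$; taking $|s| = n$ and passing to the child $C_{s^\frown i} \subseteq P^i_n$ furnishes the desired $W$.

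With $\gamma$ in hand, I would argue by contradiction: suppose $Z \subseteq \prod_{i\le d} X_i$ is somewhere-DDF and monochromatic of color $j$, witnessed by a box $\prod_{i\le d} U_i$. Then $B := \pi_d[Z]$ is DDF inside $\prod_{i<d} U_i$, hence a somewhere-DDF subset of $\prod_{i<d} X_i$, so by the reformulated hypothesis $c$ is unbounded on $B$; moreover, for each $x \in B$ the fiber $Z_x$ belongs to the filter generated by $\{Z_x : x \in B\}$ and is therefore dense in $U_d$. Choose $x^* \in B$ with $n := c(x^*) \ge n_{U_d}$. Applying the sweeping property with $U = U_d$ and $i = 1 - j$ produces a non-empty open $W \subseteq U_d$ with $W \subseteq P^{1-j}_n$; but every $y \in W$ then satisfies $\gamma(x^*, y) = 1 - j \ne j$, whereas $Z_{x^*} \subseteq \{y : \gamma(x^*,y) = j\}$, so $W \cap Z_{x^*} = \emptyset$. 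This contradicts the density of $Z_{x^*}$ in $U_d$. Notice that the argument consumes only a single dense fiber; the full filter condition and the DDF structure of the base enter solely through the unboundedness of $c$ on $B$.

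The main obstacle is arriving at the correct coloring rather than the verification, which is routine. A coloring that reads $y$ only through a fixed level function $X_d \to \omega$ (for instance, splitting $X_d$ into countably many dense pieces and comparing the level of $y$ with $c(x)$) cannot work: for any ultrafilter the level lands in exactly one of the two regions, and finite intersections of dense sets stay dense, so a Baire-category/Stone-space computation shows one color class always contains a monochromatic somewhere-DDF set. The digit-reading coloring circumvents this precisely because it is nontrivial at \emph{every} scale, so that taking $c(x^*)$ large relative to $U_d$ forces the monochromatic fiber, which must be dense in $U_d$, to omit an entire basic open subset of the opposite color. The two remaining technical points I would need to nail down are the existence of the dyadic scheme in an arbitrary perfect Polish space and the verification of the sweeping property from its defining conditions.
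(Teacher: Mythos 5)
Your proof is correct and follows essentially the same route as the paper: the paper's coloring puts $(x_0,\ldots,x_d)$ into class $0$ or $1$ according to whether $x_d$ lies in $S_{j}$ or in its complement, where $j$ is the index of the part containing $(x_0,\ldots,x_{d-1})$ and $\langle S_n\rangle$ is a sequence of open sets such that every non-empty open $U\subseteq X_d$ eventually meets both $S_n$ and $\mathrm{Int}(X_d\setminus S_n)$ --- precisely your pairs $P^0_n,P^1_n$ and your sweeping property. The only difference is that you explicitly construct these sets via a dyadic scheme (the paper simply asserts their existence) and phrase the final contradiction through a single dense fiber rather than through the projection $\pi_d[Y]$, which amounts to the same argument.
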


\begin{proof}
	Assume that $\mathsf{DDF}(\vec{X})$ is not weakly $\omega$-partition 
	subregular, and fix a partition $\prod_{i < d} X_i = \bigcup_{j < \omega} P_j$ 
	such that, for all $k < \omega$, $\bigcup_{j < k} P_j$ does not contain 
	a somewhere-DDF set. 
	
	\iffalse
	By our conventions about the trees 
	under consideration, we can assume that each $T_i$ is a perfect
	subtree of ${^{<\omega}}\omega$ and, for all $s \in T_i$, the 
	set $\suc_{T_i}(s) := \{\ell < \omega \mid s ^\frown \langle \ell \rangle \in T_i\}$ 
	is a nonempty proper initial segment of $\omega$. Therefore, each element of $[T_i]$ 
	can naturally be interpreted as an element of ${^\omega}\omega$.
	
	Given an $x \in T_i \cup [T_i]$ and an $m < \dom(x)$, we say that $m$ is a 
	\emph{splitting level} for $x$ in $T_i$ if $\suc_T(x \restriction m) > 1$. Note that, 
	since $T_i$ is a perfect tree, every $x \in [T_i]$ has infinitely many splitting 
	levels in $T_i$.
	\fi
	
	Let $\{S_n: n< \omega\}$ be a sequence of open subsets of $X_d$ with the following property:
	\begin{itemize}
		\item 
		For every non-empty open $U\subseteq X_d$, there is $N< \omega$ such that for all $n\geq N$, we have $S_n\cap U\neq \emptyset$ and $\mathrm{Int}(X_d\setminus S_n)\cap U\neq \emptyset$.
	\end{itemize} 
	We now describe a partition $\prod_{i \leq d} X_i = P^*_0 \cup P^*_1$ that will witness that $\mathsf{DDF}(\vec{X})$ is not weakly partition regular. Given $\vec{x} = \langle x_0, \ldots, x_d \rangle \in \prod_{i \leq d} X_i$, first 
	let $j(\vec{x})$ be the unique natural number $j$ such that $\langle x_0, \ldots, x_{d-1} \rangle \in P_j$. Now put $\vec{x}$ into $P^*_0$ if $x_d \in S_{j(\vec{x})}$ and into $P^*_1$ otherwise.
	
	We claim that this partition is as desired. Suppose for the sake of contradiction that there are non-empty open $U_i\subseteq X_i$ such that $P^*_0$ (wlog) contains a set $Y$ which is DDF in $\prod_{i\leq d} U_i$. However, let $N< \omega$ be such that for all $n\geq N$, we have that $S_n\cap U_d$ and $\mathrm{Int}(X_d\setminus S_n)\cap U_d$ are non-empty. Then for every $\vec{x} = (x_0,\ldots,x_{d})\in Y$, we must have that $(x_0,\ldots,x_{d-1})\in \bigcup_{j< N} P_j$. This contradicts the assumption that $\bigcup_{j<N} P_j$ does not contain a somewhere-DDF set. \qedhere   
	
	\iffalse
	\begin{claim} \label{finite_claim}
		For all $\vec{x} \in Y$, we have $(\vec{x} \restriction n) \in 
		\bigcup_{j < k} P_j$.
	\end{claim}
	
	\begin{proof}
		Suppose for sake of contradiction that $\vec{x} \in Y$ and 
		$(\vec{x} \restriction n) \in P_j$ for some $j \geq k$. Fix $y \in [T_n]$ 
		such that $t_n \in y$, and let $m$ be the $j$-th splitting level for $y$ in 
		$T_n$. Since $j \geq k$ and $t_n$ only has $k$ splitting levels, we know 
		that $m \geq \dom(t_n)$. Therefore, letting $s_\ell = (y \restriction m) ^\frown 
		\langle \ell \rangle$ for $\ell < 2$, we have $t_n <_T s_0, s_1$. Thus, 
		since $Y$ is an $(n+1)$-$\vec{t}$-DDF, we can find 
		$y_0, y_1 \in [T_n]$ such that $s_\ell \in y_\ell$ and 
		$(\vec{x} \restriction n) ^\frown y_\ell \in Y$ for each $\ell < 2$. 
		But note that $m$ is the $j$-th splitting level for both $y_0$ and $y_1$, so, 
		by construction, we have $(\vec{x} \restriction n) ^\frown y_\ell \in 
		P^*_\ell$ for each $\ell < 2$, contradicting the fact that $Y \subseteq 
		P^*_\varepsilon$.
	\end{proof}
	
	By the definition of $(n+1)$-$\vec{t}$-DDF, we know that $\{\vec{x} \restriction n 
	\mid \vec{x} \in Y\}$ is a somewhere $n$-DDF set in 
	$\prod_{i < n} [T_i]$. By Claim \ref{finite_claim}, every element thereof lies 
	in $P_j$ for some $j < k$, and therefore $\bigcup_{j < k} P_j$ contains 
	a somewhere $n$-DDF, contradicting 
	our choice of the partition $\prod_{i < n} [T_i] = \bigcup_{j < \omega} P_j$.
	\fi
\end{proof}

We now show that if the continuum is too small, then for a given $d< \omega$ and any $\vec{X}$ as above, we have that $\mathsf{DDF}(\vec{X})$ is not weakly $\omega$-partition subregular. To that end, we now define a sequence of colorings $\langle c_n : 1 \leq n < \omega \rangle$, 
where $c_n:[\omega_n]^{n+1} \rightarrow \omega$ for all $1 \leq n < \omega$. The 
definition is by recursion on $n$. First, let $c_1:[\omega_1]^2 \rightarrow \omega$ 
be any function such that, for all $\beta < \omega_1$, the fiber 
$c_1(\cdot, \beta) : \beta \rightarrow \omega$ is injective. Now suppose that 
$1 \leq n < \omega$ and we have defined $c_n$. For each $\beta < \omega_{n+1}$, 
let $e_\beta : \beta \rightarrow \omega_n$ be an injective function. Then, 
for each $\{\alpha_0, \ldots, \alpha_n, \beta\} \in [\omega_{n+1}]^{n+2}$ with 
$\alpha_0 < \ldots < \alpha_n < \beta$, set 
\[
c_{n+1}(\alpha_0, \ldots, \alpha_n, \beta) = c_n(e_\beta(\alpha_0), \ldots, 
e_\beta(\alpha_n)).
\]
(Note that the $(n+1)$-tuple $(e_\beta(\alpha_0), \ldots, e_\beta(\alpha_n))$ 
may not be increasing, but it is certainly injective.)

Now, given $1 \leq n < \omega$ and $a \in [\omega_n]^{n+1}$, we 
specify a distinguished element $a(\ast) \in a$. We do this by 
recursion on $n$. First, if $n = 1$, then we simply let 
$a(\ast) = \min(a)$. Next, if 
$n > 1$, then let $\beta = \max(a)$, let 
$a_1 := \{e_\beta(\alpha) : \alpha \in a \setminus \{\beta\}\}$, and 
let $a(\ast) = e_\beta^{-1}(a_1(\ast))$.

\begin{lemma} \label{difference_lemma}
	Suppose that $1 \leq n < \omega$, $a,b \in [\omega_n]^{n+1}$, $a(\ast) \neq 
	b(\ast)$, and $a \setminus \{a(\ast)\} = b \setminus \{b(\ast)\}$. Then 
	$c_n(a) \neq c_n(b)$.
\end{lemma}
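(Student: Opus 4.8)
The plan is to prove the lemma by induction on $n$. I would handle the base case $n = 1$ directly using the injectivity of the fibers of $c_1$, and reduce the inductive step from dimension $n+1$ to dimension $n$ by composing with the injections $e_\beta$.

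For $n = 1$, I would write $a = \{a(\ast), \gamma\}$ and $b = \{b(\ast), \delta\}$ with $a(\ast) = \min(a)$ and $b(\ast) = \min(b)$, so that $\gamma = \max(a)$ and $\delta = \max(b)$. The hypothesis $a \setminus \{a(\ast)\} = b \setminus \{b(\ast)\}$ gives $\{\gamma\} = \{\delta\}$, hence $\gamma = \delta$. Then, since $a(\ast) \neq b(\ast)$ and the fiber $c_1(\cdot, \gamma) : \gamma \to \omega$ is injective, I get $c_1(a) = c_1(a(\ast), \gamma) \neq c_1(b(\ast), \gamma) = c_1(b)$.

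For the inductive step, I would take $a, b \in [\omega_{n+1}]^{n+2}$ satisfying the hypotheses and set $S := a \setminus \{a(\ast)\} = b \setminus \{b(\ast)\}$. The crucial observation, and what I expect to be the main point, is that in dimension $n+1 \geq 2$ the distinguished element is never the maximum: by the recursive definition, $a(\ast)$ equals $e_{\max(a)}^{-1}$ applied to a member of $e_{\max(a)}[\,a \setminus \{\max(a)\}\,]$, so $a(\ast) \in a \setminus \{\max(a)\}$, and in particular $a(\ast) \neq \max(a)$; likewise $b(\ast) \neq \max(b)$. This forces $\max(a), \max(b) \in S$, whence $\max(a) = \max(S) = \max(b) =: \beta$, so that the same injection $e_\beta$ is used to compute both $c_{n+1}(a)$ and $c_{n+1}(b)$.

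I would then set $a_1 := e_\beta[a \setminus \{\beta\}]$ and $b_1 := e_\beta[b \setminus \{\beta\}]$, both elements of $[\omega_n]^{n+1}$, and verify the hypotheses of the induction hypothesis for them. Recalling that $a_1(\ast) = e_\beta(a(\ast))$ and $b_1(\ast) = e_\beta(b(\ast))$, injectivity of $e_\beta$ together with $a(\ast) \neq b(\ast)$ gives $a_1(\ast) \neq b_1(\ast)$; and since $a \setminus \{\beta\} = \{a(\ast)\} \cup (S \setminus \{\beta\})$ and $b \setminus \{\beta\} = \{b(\ast)\} \cup (S \setminus \{\beta\})$, applying $e_\beta$ and then removing the distinguished elements yields $a_1 \setminus \{a_1(\ast)\} = e_\beta[S \setminus \{\beta\}] = b_1 \setminus \{b_1(\ast)\}$. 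The induction hypothesis then gives $c_n(a_1) \neq c_n(b_1)$, and since $c_{n+1}(a) = c_n(a_1)$ and $c_{n+1}(b) = c_n(b_1)$ by the definition of $c_{n+1}$, I would conclude $c_{n+1}(a) \neq c_{n+1}(b)$. The only delicate bookkeeping is the identity $a_1(\ast) = e_\beta(a(\ast))$, which is exactly what makes the removed elements match up across the reduction.
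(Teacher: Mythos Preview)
Your proof is correct and follows essentially the same inductive approach as the paper's own proof. You in fact supply a detail the paper leaves implicit, namely the justification that $\max(a) = \max(b)$ in the inductive step (because for $n \geq 2$ the distinguished element $a(\ast)$ lies in $a \setminus \{\max(a)\}$ by construction).
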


\begin{proof}
	We proceed by induction on $n$. If $n = 1$, then there is $\beta > \max\{a(\ast), 
	b(\ast)\}$ such that $a = \{a(\ast), \beta\}$ and $b = \{b(\ast), \beta\}$. 
	Then $c_1(a) \neq c_1(b)$ follows from the fact that $c_1(\cdot, \beta)$ is injective.
	
	Next, suppose that $n > 1$. Let $\beta = \max(a) = \max(b)$, let 
	$a_1 = \{e_\beta(\alpha) : \alpha \in a \setminus \{\beta\}\}$, and let 
	$b_1 = \{e_\beta(\alpha) : \alpha \in b \setminus \{\beta\}\}$. 
	Then 
	\begin{enumerate}
		\item $a(\ast) = e_\beta^{-1}(a_1(\ast))$ and 
		$b(\ast) = e_\beta^{-1}(b_1(\ast))$;
		\item $a_1 \setminus \{a_1(\ast)\} = b_1 \setminus \{b_1(\ast)\}$;
		\item $c_n(a) = c_{n-1}(a_1)$ and $c_n(b) = c_{n-1}(b_1)$.
	\end{enumerate}
	Items (1) and (2), combined with the induction hypothesis, imply that 
	$c_{n-1}(a_1) \neq c_{n-1}(b_1)$, and then item (3) implies that $c_n(a) \neq 
	c_n(b)$.
\end{proof}

\begin{lemma} \label{coloring_lemma}
	Fix $1 \leq n < \omega$. There is a coloring $c\colon (\omega_n)^{n+1} \rightarrow 
	(n+2) \times \omega$ and a sequence of natural numbers $\langle m_{k} : k < \omega \rangle$ 
	such that, for every $k < \omega$ and every sequence $\langle A_i : i \leq n \rangle$ 
	of elements of $[\omega_n]^{m_{k}}$, we have 
	\[
	\left|c\left[\prod_{i \leq n} A_i\right]\right| > k.
	\]
\end{lemma}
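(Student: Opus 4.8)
The plan is to put the real content into the second coordinate of $c$ and to show that already this coordinate takes more than $k$ values on any large grid. Given $\vec\alpha = (\alpha_0,\ldots,\alpha_n)\in(\omega_n)^{n+1}$, I set $c(\vec\alpha) = (n+1,0)$ if the entries of $\vec\alpha$ are not pairwise distinct; otherwise I let $a = \{\alpha_0,\ldots,\alpha_n\}\in[\omega_n]^{n+1}$, let $i^\ast\le n$ be the coordinate with $\alpha_{i^\ast}=a(\ast)$, and set $c(\vec\alpha)=(i^\ast,c_n(a))$. This lands in $(n+2)\times\omega$. Call $a\in[\omega_n]^{n+1}$ a \emph{transversal} of $\langle A_i:i\le n\rangle$ if $a=\{\beta_0,\ldots,\beta_n\}$ with $\beta_i\in A_i$ for each $i$; such an $a$ is realized by the distinct-entry tuple $(\beta_0,\ldots,\beta_n)\in\prod_{i\le n}A_i$, whose $c$-value has second coordinate $c_n(a)$. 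Hence $|c[\prod_{i\le n}A_i]|$ is at least the number of distinct values of $c_n$ on transversals of $\langle A_i:i\le n\rangle$, and it suffices to bound the latter below.

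Thus the core claim, which I would prove by induction on $n$, is: for every grid $\langle A_i:i\le n\rangle$ with each $|A_i|\ge k+n+1$, the coloring $c_n$ takes more than $k$ values on transversals. I then set $m_k=k+n+1$. For the base case $n=1$, put $\beta^\ast=\max(A_0\cup A_1)$ and fix $j_0<2$ with $\beta^\ast\in A_{j_0}$; for each of the at least $k+1$ many $\alpha\in A_{1-j_0}$ with $\alpha<\beta^\ast$, the pair $\{\alpha,\beta^\ast\}$ is a transversal with $c_1$-value $c_1(\alpha,\beta^\ast)$, and these are pairwise distinct because the fiber $c_1(\cdot,\beta^\ast)$ is injective.

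The inductive step is where the recursion defining $c_n$ carries the argument, and getting this alignment exactly right is the step I expect to be the main obstacle. Given a grid $\langle A_i:i\le n+1\rangle$ in $\omega_{n+1}$ with sides $\ge k+n+2$, let $\beta^\ast=\max(\bigcup_{i\le n+1}A_i)$ and fix $j_0$ with $\beta^\ast\in A_{j_0}$. The key observation is that for any transversal $a'$ of the reduced $(n+1)$-coordinate grid $\langle A_i:i\le n+1,\ i\ne j_0\rangle$ the set $a:=\{\beta^\ast\}\cup a'$ is a transversal of the full grid with $\max(a)=\beta^\ast$, so by the defining recursion $c_{n+1}(a)=c_n\!\left(e_{\beta^\ast}[a']\right)$. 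As $a'$ ranges over transversals of the reduced grid, $e_{\beta^\ast}[a']$ ranges over transversals of the grid $\langle e_{\beta^\ast}[A_i\setminus\{\beta^\ast\}]:i\ne j_0\rangle$ in $\omega_n$ --- an honest $n$-dimensional grid whose $n+1$ sides each have size at least $(k+n+2)-1=k+n+1$. By the induction hypothesis $c_n$ takes more than $k$ values on its transversals, and since $c_{n+1}(\{\beta^\ast\}\cup a')=c_n(e_{\beta^\ast}[a'])$, the coloring $c_{n+1}$ takes more than $k$ values on transversals of the full grid, as needed.

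The points to check with care are exactly that removing the global maximum $\beta^\ast$ corresponds to a single step of the recursion defining $c_{n+1}$, that this reduces the dimension by one while shrinking each side by at most one, and that the injectivity of each $e_\beta$ turns the reduced configuration back into a transversal grid of the correct sizes in $\omega_n$; once these are in place the induction closes with $m_k=k+n+1$. It is worth noting that this route proceeds directly from the recursive definition of $c_n$ together with the injectivity of the fibers $c_1(\cdot,\beta)$ and of the maps $e_\beta$, and in particular does not seem to require Lemma~\ref{difference_lemma}.
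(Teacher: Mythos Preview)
Your argument is correct and gives a genuinely different proof from the paper's. Both define the coloring $c$ identically, but the paper proceeds by contradiction using the finite Ramsey theorem: it chooses $m_k^\ast$ with $m_k^\ast \to (n+2)^{n+1}_k$, sets $m_k = (n+1)\,m_k^\ast$, refines to pairwise disjoint subsets, and then finds a homogeneous $(n+2)$-set whose two derived $(n+1)$-tuples contradict Lemma~\ref{difference_lemma}. Your route is a direct induction on $n$ that exploits only the recursive definition of $c_{n+1}$ together with the injectivity of the fibers $c_1(\cdot,\beta)$ and of the maps $e_\beta$: peel off the global maximum $\beta^\ast$, drop its coordinate, push the remaining grid into $\omega_n$ via $e_{\beta^\ast}$, and apply the induction hypothesis. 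This avoids both Ramsey's theorem and Lemma~\ref{difference_lemma}, and it yields the explicit linear bound $m_k = k+n+1$ rather than a Ramsey number. The paper's approach, by contrast, actually uses the first coordinate $i^\ast$ and the combinatorics of $a(\ast)$; this extra structure is not needed for the lemma as stated but is exactly what is exploited later in the paper's application to the failure of $\mathrm{PH}_n(\omega_n)$. For the lemma itself your argument is cleaner, and the care points you flag (that removing $\beta^\ast$ matches one recursion step, that each side loses at most one element, and that $e_{\beta^\ast}$ carries transversals to transversals of the same sizes) are precisely the verifications needed.
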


\begin{proof}
	We first define $c\colon (\omega_n)^{n+1} \rightarrow (n+2) \times \omega$. Suppose that 
	$\vec{\alpha} = \langle \alpha_i : i \leq n \rangle \in (\omega_n)^{n+1}$. 
	If there are $i < j \leq n$ such that $\alpha_i = \alpha_j$, then let $c(\vec{\alpha}) := 
	(n+1, 0)$. Otherwise, set $a_{\vec{\alpha}} := \{\alpha_i : i \leq n\}$, and note that 
	$a_{\vec{\alpha}} \in [\omega_n]^{n+1}$. Let $i_{\vec{\alpha}}$ be the unique $i \leq n$ such that 
	$a_{\vec{\alpha}}(\ast) = \alpha_i$, and let 
	$c(\vec{\alpha}) = (i_{\vec{\alpha}}, c_n(a_{\vec{\alpha}}))$.
	
	We now define $\langle m_{k} : k < \omega \rangle$. First, let $m_{0} = 1$. 
	If $k > 0$, then first let $m^*_{k} < \omega$ be large enough so that 
	\[
	m^*_{k} \rightarrow (n+2)^{n+1}_k,
	\]
	i.e., for every coloring $r\colon [m^*_{k}]^{n+1} \rightarrow k$, there is $H \in [m^*_{k}]^{n+2}$ such 
	that $r{\restriction}[H]^{n+1}$ is constant. Then let $m_{k} = (n+1) \cdot m^*_{k}$.\footnote{We are 
		not making any real attempt to optimize the value of $m_{k}$ here; we 
		just need to know that a natural number $m_{k}$ with the desired properties exists.} 
	
	We claim that $\langle m_{k} : k < \omega \rangle$ is as desired. If $k = 0$, then this is clear, 
	so fix $k > 0$ and a sequence 
	$\langle A_i : i \leq n \rangle$ of elements of $[\omega_n]^{m_{k}}$, and suppose for sake 
	of contradiction that $\left|c\left[\prod_{i \leq n} A_i\right]\right| \leq k$. 
	First, since $m_{k} = (n+1) \cdot m^*_{k}$, we can find a sequence of 
	pairwise disjoint sets $\langle A^*_i : i \leq n \rangle$ such that, for all $i \leq n$, 
	$A^*_i \in [A_i]^{m^*_{k}}$. In particular, for all $\vec{\alpha} \in \prod_{i \leq n} A^*_i$, 
	$\vec{\alpha}$ is injective and hence, if $c(\vec{\alpha}) = (i,\ell)$, then $i \leq n$ and 
	$\ell = c_n(a_{\vec{\alpha}})$.
	
	For each $i \leq n$, enumerate $A^*_i$ in increasing order as $\langle \alpha^i_j \mid j < m^*_{k} 
	\rangle$, and define a coloring $r$ of $[m^*_{k}]^{n+1}$ as follows. Given 
	$u \in [m^*_{k}]^{n+1}$, let $\vec{\alpha}^*_u := \langle \alpha^i_{u(i)} \mid i \leq n \rangle$, 
	and let $r(u) = c(\vec{\alpha}^*_u)$. Since $\left|c\left[\prod_{i \leq n} A_i\right]\right| \leq k$, 
	the coloring $r$ takes at most $k$-many colors. Therefore, by our choice of $m^*_{k}$, we can 
	find $H \in [m^*_{k}]^{n+2}$ such that $r \restriction [H]^{n+1}$ is constant, say with 
	value $(i^*, \ell^*)$. By the last sentence of the previous paragraph, we know that $i^* \leq n$. 
	
	Enumerate $H$ in increasing order as $\langle \ell_0, \ldots, \ell_{i^* - 1}, \ell_{i^*, 0}, 
	\ell_{i^*, 1}, \ell_{i^* + 1}, \ldots, \ell_n \rangle$, i.e.:
	\begin{itemize}
		\item the first $i^*$-many elements of $H$ are $\langle \ell_0, \ldots, \ell_{i^* - 1} \rangle$;
		\item the next $2$ elements of $H$ are $\langle \ell_{i^*, 0}, \ell_{i^*, 1} \rangle$;
		\item the final $(n-i^*)$-many elements of $H$ are $\langle \ell_{i^* + 1}, \ldots, \ell_n \rangle$.
	\end{itemize}
	Let $u_0 = H \setminus \{\ell_{i^*, 1}\}$ and $u_1 = H \setminus \{\ell_{i^*, 0}\}$, let
	$\vec{\alpha}^0 = \vec{\alpha}^*_{u_0}$ and $\vec{\alpha}^1 = \vec{\alpha}^*_{u_1}$, and let 
	$a^0 = a_{\vec{\alpha}^0}$ and $a^1 = a_{\vec{\alpha}^1}$. Since $r(u_0) = r(u_1) = (i^*, \ell^*)$, 
	we have $c(\vec{\alpha}^0) = c(\vec{\alpha}^1) = (i^*, \ell^*)$, and hence 
	\begin{itemize}
		\item $a_0(\ast) = \alpha^{i^*}_{\ell^*_0}$ and $a_1(\ast) = \alpha^{i^*}_{\ell^*_1}$;
		\item $c_n(a_0) = c_n(a_1) = \ell^*$.
	\end{itemize}
	However, we also know that $a_0(\ast) \neq a_1(\ast)$ and $a_0 \setminus \{a_0(\ast)\} = 
	a_1 \setminus \{a_1(\ast)\}$ and hence, by Lemma \ref{difference_lemma}, 
	$c_n(a_0) \neq c_n(a_1)$. This is a contradiction.
\end{proof}

\begin{theorem} \label{Thm:DDFbad}
	Suppose that $2 \leq d < \omega$ and $2^{\aleph_0} \leq \aleph_{d-1}$. Then, for 
	every $d$-sequence $\vec{X}$ of perfect Polish spaces, $\mathsf{DDF}(\vec{X})$ is 
	not weakly $\omega$-partition subregular.
\end{theorem}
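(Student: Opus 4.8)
The plan is to show that, under the hypothesis $2^{\aleph_0} \leq \aleph_{d-1}$, the combinatorial machinery of Lemma~\ref{coloring_lemma} can be transported onto $\prod_{i<d} X_i$ to produce a countable partition with no initial segment containing a somewhere-DDF set. First I would set $n := d-1$, so that $1 \leq n < \omega$ and $\omega_n = \omega_{d-1} \geq 2^{\aleph_0}$. Lemma~\ref{coloring_lemma} furnishes a coloring $c \colon (\omega_n)^{n+1} \to (n+2) \times \omega$ together with a sequence $\langle m_k : k < \omega \rangle$ such that any product $\prod_{i \leq n} A_i$ of sets $A_i \in [\omega_n]^{m_k}$ has $c$-image of size exceeding $k$. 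The key point is that $n+1 = d$, so $(\omega_n)^{n+1} = (\omega_{d-1})^d$ is a $d$-fold product matching the dimension of $\vec{X}$.

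Next I would fix, for each $i < d$, an injection $f_i \colon X_i \to \omega_{d-1}$; this is possible precisely because $|X_i| = 2^{\aleph_0} \leq \aleph_{d-1}$. These injections assemble into a map $F \colon \prod_{i<d} X_i \to (\omega_{d-1})^d$ sending $\vec{x} = \langle x_0, \ldots, x_{d-1}\rangle$ to $\langle f_0(x_0), \ldots, f_{d-1}(x_{d-1})\rangle$. Composing with $c$ gives a coloring $\gamma := c \circ F$ of $\prod_{i<d} X_i$ into the countable set $(n+2) \times \omega$. Re-indexing the color set as $\omega$ via a bijection, I obtain a partition $\prod_{i<d} X_i = \bigcup_{j<\omega} P_j$. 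To verify weak $\omega$-partition subregularity fails, I must exhibit this as a partition no initial union of which contains a somewhere-DDF set; the subtlety is that the color classes $P_j$ are indexed abstractly, so I would instead argue directly that for each $k < \omega$, the preimage $\gamma^{-1}[B_k]$ of any $k$-element subset $B_k$ of the color set contains no somewhere-DDF set, which suffices after arranging the enumeration of colors appropriately. Concretely, I would show that any somewhere-DDF set $Z \subseteq \prod_{i<d} X_i$ has $|\gamma[Z]| = \omega$, i.e.\ $\gamma$ is injective-in-the-limit on DDF sets.

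The heart of the argument is the following step, which I expect to be the main obstacle: translating the product-largeness conclusion of Lemma~\ref{coloring_lemma} into a statement about somewhere-DDF sets. The bridge is the (commented-out) observation recorded in the excerpt as \texttt{finite\_product\_prop}, whose content is that any somewhere-DDF set contains arbitrarily large finite full products $\prod_{i<d} A_i$ with $|A_i| = k$ for any prescribed $k$. Granting this, I would argue as follows. Fix a somewhere-DDF $Z$ and an arbitrary $k < \omega$. Using the finite-product property, find $A_i \subseteq \pi_i[\text{-sections of }Z]$ with $\prod_{i<d} A_i \subseteq Z$ and with each $|A_i|$ at least $m_k$; shrinking, assume $|A_i| = m_k$. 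Since each $f_i$ is injective, the images $f_i[A_i] \in [\omega_{d-1}]^{m_k}$, and by Lemma~\ref{coloring_lemma} the product $\prod_{i<d} f_i[A_i]$ receives more than $k$ distinct $c$-colors. As $F$ carries $\prod_{i<d} A_i$ onto $\prod_{i<d} f_i[A_i]$, we conclude $|\gamma[Z]| \geq |c[\prod_{i<d} f_i[A_i]]| > k$. Since $k$ was arbitrary, $\gamma[Z]$ is infinite.

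Finally I would assemble this into the stated failure of weak $\omega$-partition subregularity. Enumerate the range of $\gamma$ as $\langle \rho_j : j < \omega \rangle$ (a bijection with $\omega$, or a prefix thereof) and set $P_j := \gamma^{-1}(\{\rho_j\})$. For any $k < \omega$, the initial union $\bigcup_{j < k} P_j = \gamma^{-1}[\{\rho_j : j < k\}]$ is the preimage of a set of at most $k$ colors; were it to contain a somewhere-DDF set $Z$, we would have $|\gamma[Z]| \leq k$, contradicting the conclusion of the previous paragraph that $|\gamma[Z]| > k$. Hence no initial union $\bigcup_{j<k} P_j$ contains a somewhere-DDF set, which is exactly the statement that $\mathsf{DDF}(\vec{X})$ is not weakly $\omega$-partition subregular. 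The only genuine work beyond bookkeeping is securing the finite-product property for somewhere-DDF sets, which follows by the straightforward induction on $d$ sketched in \texttt{finite\_product\_prop} (repeatedly using perfectness of the spaces to pick large finite subsets and the filter property to intersect fibers), so I would either cite that proposition or reprove it inline.
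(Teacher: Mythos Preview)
Your proof is correct and follows essentially the same approach as the paper: pull back the coloring of Lemma~\ref{coloring_lemma} along coordinate-wise injections $X_i \hookrightarrow \omega_{d-1}$ and use that every somewhere-DDF set contains arbitrarily large finite grids. The paper differs only cosmetically---it inducts on $d$ to reduce to the case $2^{\aleph_0} = \aleph_{d-1}$ (your direct injection argument is marginally cleaner and avoids this), and for the finite-product step it cites Proposition~\ref{Prop:DDFImpliesFPG} (somewhere-DDF $\Rightarrow$ finitary somewhere dense grid) rather than the commented-out \texttt{finite\_product\_prop}, so you should point to that proposition instead.
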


\begin{proof}
	Proceed by induction on $d$. Fix a sequence $\vec{X} = \langle X_0, \ldots, X_{d-1} 
	\rangle$ of perfect Polish spaces. We can assume that we in fact have $2^{\aleph_0} 
	= \aleph_{d-1}$, since if $2^{\aleph_0} = \aleph_m < \aleph_{d-1}$, then the induction 
	hypothesis will imply that $\mathsf{DDF}(\vec{X}{\restriction}(m+1))$ is not 
	weakly $\omega$-partition subregular, which immediately implies that 
	$\mathsf{DDF}(\vec{X})$ is not weakly $\omega$-partition subregular either. 
	For all $i < d$, $X_i$ is a perfect Polish space, so we can injectively enumerate $X_i$ as 
	$\langle x_{i, \alpha} : \alpha < \omega_{d-1} \rangle$.
	
	By Lemma \ref{coloring_lemma} with $n = d-1$, we can find a coloring $c \colon \prod_{i < d} X_i 
	\rightarrow (d+1) \times \omega$ and a sequence of natural numbers $\langle m_{k} 
	: k < \omega \rangle$ such that, for every $k < \omega$ and every sequence 
	$\langle A_i : i< d \rangle$ such that $A_i$ is a subset of $X_i$ of size 
	$m_{k}$ for each $i <d$, we have $|c[\prod_{i <d} A_i]| > k$. By Proposition~\ref{Prop:DDFImpliesFPG}, every $Y\in \mathsf{DDF}(\vec{X})$ contains arbitrarily large finite products, which by the discussion above implies that $c[Y]$ is infinite.
\end{proof}

\begin{corollary}
    \label{Cor:DDFbad}
	Suppose that $2 \leq d < \omega$ and $2^{\aleph_0} \leq \aleph_{d-1}$. Then, for every 
	$(d+1)$-sequence $\vec{X}$ of perfect Polish spaces, there is a $2$-coloring of $\prod_{i < d} X_i$ witnessing that $\mathsf{DDF}(\vec{X})$ is not weakly partition regular.
\end{corollary}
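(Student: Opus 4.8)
The plan is to derive this directly by chaining together Theorem~\ref{Thm:DDFbad} and Proposition~\ref{sideways_prop}, which between them already contain all the necessary content. Given a $(d+1)$-sequence $\vec{X} = \langle X_0, \ldots, X_d \rangle$ of perfect Polish spaces, I would first isolate its initial $d$-segment $\vec{Y} := \langle X_0, \ldots, X_{d-1} \rangle$, so that $\vec{X} = \vec{Y}^\frown X_d$ and the final factor $X_d$ plays the role of the ``extra'' space appearing in Proposition~\ref{sideways_prop}.

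The first step is to apply Theorem~\ref{Thm:DDFbad} to the $d$-sequence $\vec{Y}$. Since the hypotheses $2 \leq d < \omega$ and $2^{\aleph_0} \leq \aleph_{d-1}$ are exactly those required by that theorem at dimension $d$, we conclude that $\mathsf{DDF}(\vec{Y})$ is not weakly $\omega$-partition subregular. The second step is to feed this conclusion into Proposition~\ref{sideways_prop}, with the $\vec{X}$ and $X_d$ of that statement instantiated as $\vec{Y}$ and $X_d$ here. The proposition then yields a $2$-coloring of $\prod_{i \leq d} X_i$ witnessing that $\mathsf{DDF}(\vec{Y}^\frown X_d) = \mathsf{DDF}(\vec{X})$ is not weakly partition regular, which is precisely the desired conclusion.

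The only point requiring care is the bookkeeping of dimensions: Theorem~\ref{Thm:DDFbad} must be invoked at dimension $d$ (on a $d$-sequence) with the hypothesis $2^{\aleph_0} \leq \aleph_{d-1}$, after which Proposition~\ref{sideways_prop} raises the dimension by one to produce the statement about the $(d+1)$-sequence $\vec{X}$ and its full product $\prod_{i \leq d} X_i$. Once these indices are matched up correctly, nothing further is needed. I expect no genuine obstacle here, as the corollary is essentially a formal consequence of the two previously established results rather than one demanding any new combinatorial input.
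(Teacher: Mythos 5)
Your proposal is correct and follows exactly the paper's own argument: the paper derives this corollary verbatim as ``immediate from Proposition~\ref{sideways_prop} and Theorem~\ref{Thm:DDFbad},'' with the same instantiation (apply the theorem to the initial $d$-segment, then lift the dimension by one via the proposition). Your careful index bookkeeping even quietly corrects what appears to be a typo in the corollary's statement, where $\prod_{i<d}X_i$ should read $\prod_{i\leq d}X_i$.
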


\begin{proof}
	This is immediate from Proposition \ref{sideways_prop} and Theorem \ref{Thm:DDFbad}.
\end{proof}

We have now established clauses (1) and (2) of Theorem A.

\begin{corollary}
  Suppose that $2 \leq d < \omega$.
  \begin{enumerate}
    \item If $\DDF_d$ holds, then $2^{\aleph_0} \geq \aleph_{d-1}$.
    \item If $\PG_d(\aleph_0)$ holds, then $2^{\aleph_0} \geq \aleph_d$.
  \end{enumerate}
\end{corollary}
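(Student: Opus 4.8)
The plan is to obtain both clauses as contrapositives of the two results just established, Theorem~\ref{Thm:DDFbad} and Corollary~\ref{Cor:DDFbad}; no new combinatorics is needed, since all the genuine content already lives in those statements (and ultimately in Proposition~\ref{sideways_prop} and Lemma~\ref{coloring_lemma}).

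For clause (2), I would suppose $\PG_d(\aleph_0)$ holds and fix an arbitrary $d$-sequence $\vec{X}$ of perfect Polish spaces. Viewing any partition $\prod_{i<d} X_i = \bigcup_{j<\omega} P_j$ into countably many parts as a coloring $\gamma\colon \prod_{i<d} X_i \to \omega$, the principle $\PG_d(\aleph_0)$ yields a monochromatic somewhere dense grid, which, as observed immediately after Definition~\ref{Def:DDF}, is in particular somewhere-DDF. Thus the relevant part $P_j$ lies in $\mathsf{DDF}(\vec{X})$, so $\mathsf{DDF}(\vec{X})$ is weakly $\omega$-partition regular. Since $\mathsf{DDF}(\vec{X})$ is upward closed under $\subseteq$ (any superset of a set containing a somewhere-DDF subset again contains that subset), weak $\omega$-partition regularity implies weak $\omega$-partition subregularity. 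But Theorem~\ref{Thm:DDFbad} says that whenever $2^{\aleph_0} \le \aleph_{d-1}$ the collection $\mathsf{DDF}(\vec{X})$ is \emph{not} weakly $\omega$-partition subregular. Hence $\PG_d(\aleph_0)$ rules out $2^{\aleph_0} \le \aleph_{d-1}$, giving $2^{\aleph_0} \ge \aleph_d$.

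For clause (1), the case $d = 2$ is vacuous, as $2^{\aleph_0} \ge \aleph_1 = \aleph_{d-1}$ is a theorem of $\mathsf{ZFC}$. For $d \ge 3$ I would argue contrapositively: assume $2^{\aleph_0} < \aleph_{d-1}$, equivalently $2^{\aleph_0} \le \aleph_{d-2}$. Apply Corollary~\ref{Cor:DDFbad} with its dimension parameter set to $d-1$; the hypotheses $2 \le d-1$ and $2^{\aleph_0} \le \aleph_{(d-1)-1} = \aleph_{d-2}$ both hold. The conclusion then supplies, for every $d$-sequence $\vec{X}$ of perfect Polish spaces, a $2$-coloring of the full product witnessing that $\mathsf{DDF}(\vec{X})$ is not weakly partition regular, i.e.\ a finite coloring with no monochromatic somewhere-DDF set. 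This is precisely a failure of $\DDF_d$, so the contrapositive gives $\DDF_d \Rightarrow 2^{\aleph_0} \ge \aleph_{d-1}$.

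I do not expect a real obstacle here; the only points requiring care are bookkeeping and phrasing. First, when invoking Corollary~\ref{Cor:DDFbad} one must correctly execute the dimension shift $d \mapsto d-1$ and read the hypothesis $2^{\aleph_0} < \aleph_{d-1}$ as $2^{\aleph_0} \le \aleph_{d-2}$. Second, for clause (2) one must notice that $\PG_d(\aleph_0)$ directly yields only full weak $\omega$-partition regularity, whereas Theorem~\ref{Thm:DDFbad} refutes the formally weaker subregular variant; bridging these requires the monotonicity of $\mathsf{DDF}(\vec{X})$ noted above. These are routine, and everything else is a direct citation of the preceding results.
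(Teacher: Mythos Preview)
Your proposal is correct and follows essentially the same route as the paper: both clauses are read off as contrapositives of Theorem~\ref{Thm:DDFbad} and Corollary~\ref{Cor:DDFbad}, with the dimension shift $d\mapsto d-1$ for clause~(1). Your treatment is simply more explicit than the paper's two-line proof---in particular, you spell out the passage from weak $\omega$-partition regularity to subregularity via upward closure of $\mathsf{DDF}(\vec{X})$, and you separate off the trivial case $d=2$ in clause~(1), both of which the paper leaves implicit.
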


\begin{proof}
  Note that $\DDF_d$ is equivalent to the assertion that, for every 
  $d$-sequence $\vec{X}$ of perfect Polish spaces, $\DDF(\vec{X})$ is weakly partition regular, 
  and $\PG_d(\aleph_0)$ implies that $\DDF(\vec{X})$ is weakly $\omega$-partition 
  subregular. Clauses (1) and (2) then immediately follow from Corollary~\ref{Cor:DDFbad} and 
  Theorem~\ref{Thm:DDFbad}, respectively.
\end{proof}

\subsection{On the Partition Hypothesis} In \cite{bbmt}, Bannister, Bergfalk, Moore, and 
Todor\v{c}evi\'{c} introduce a partition hypothesis denoted $\mathrm{PH}_n(\Lambda)$, where 
$n < \omega$ is a dimensional parameter and $\Lambda$ is an arbitrary directed 
quasi-order. They prove there that, for all $n < \omega$, $\mathrm{PH}_n(\omega_n)$ fails, 
where $\omega_n$ has the usual ordinal ordering. The proof of this fact presented in 
\cite{bbmt} makes heavy use of ideas coming from simplicial homology. Here, we show how the 
results from this section yield a direct, purely combinatorial proof. We first recall the 
following definitions from \cite{bbmt}.

\begin{definition}
  Suppose that $1 \leq n < \omega$ and $\Lambda$ is a directed quasi-order.
  \begin{enumerate}
    \item If $\vec{x}, \vec{y} \in \Lambda^{\leq n}$, then we write $\vec{x} 
    \trianglelefteq \vec{y}$ to indicate that $\vec{x}$ is a subsequence of 
    $\vec{y}$ (not necessarily an initial segment). $\vec{x} \triangleleft \vec{y}$ 
    indicates that $\vec{x}$ is a \emph{proper} subsequence of $\vec{y}$.
    \item A function $F:\Lambda^{\leq n} \rightarrow \Lambda$ is \emph{$n$-cofinal} if
    \begin{enumerate}
      \item $x \leq F(\langle x \rangle)$ for all $x \in \Lambda$;
      \item $F(\vec{x}) \leq F(\vec{y})$ for all $\vec{x} \trianglelefteq \vec{y}$ 
      in $\Lambda^{\leq n}$.
    \end{enumerate}
    \item Let $\Lambda^{\llbracket n \rrbracket} \subseteq \prod_{i<n} \Lambda^{i+1}$ 
      consist of all $\sigma \in \prod_{i<n} \Lambda^{i+1}$ that are 
      $\trianglelefteq$-increasing. If $F:\Lambda^{\leq n} \rightarrow \Lambda$ 
      is $n$-cofinal, define $F^*:\Lambda^{\llbracket n \rrbracket} \rightarrow 
      \Lambda^n$ by letting $F^*(\sigma) = F \circ \sigma = \langle F(\sigma(i)) \mid 
      i < n \rangle$ for all $\sigma \in \Lambda^{\llbracket n \rrbracket}$.
  \end{enumerate}
\end{definition}

\begin{definition}
  Suppose that $n < \omega$ and $\Lambda$ is a directed quasi-order. The Partition 
  Hypothesis $\mathrm{PH}_n(\Lambda)$ is the following assertion: for all 
  $c:\Lambda^{n+1} \rightarrow \omega$, there is an $(n+1)$-cofinal function 
  $F:\Lambda^{\leq n+1} \rightarrow \Lambda$ such that $c \circ F^* : 
  \Lambda^{\llbracket n+1 \rrbracket} \rightarrow \omega$ is constant.
\end{definition}

We are now ready to give a direct proof of the aforementioned result from 
\cite{bbmt}.

\begin{theorem}
  For all $n < \omega$, $\mathrm{PH}_n(\omega_n)$ fails.
\end{theorem}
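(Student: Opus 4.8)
The plan is to reduce $\mathrm{PH}_n(\omega_n)$ to the combinatorial machinery already developed in Lemmas~\ref{difference_lemma} and~\ref{coloring_lemma}, specifically to the coloring $c_n : [\omega_n]^{n+1} \to \omega$ and its key injectivity property from Lemma~\ref{difference_lemma}. The coloring $c_n$ was designed precisely so that it cannot be made constant on large products, and the failure of $\mathrm{PH}_n(\omega_n)$ should amount to exhibiting a coloring of $\omega_n^{n+1}$ that no $(n+1)$-cofinal $F$ can trivialize. So first I would take $c_n$ (or the closely related $c$ from Lemma~\ref{coloring_lemma}) as the witnessing coloring $c : \omega_n^{n+1} \to \omega$ for the failure of $\mathrm{PH}_n(\omega_n)$, where $\omega_n$ carries its usual ordinal (hence directed, linear) order.

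Next I would unpack what an $(n+1)$-cofinal function $F : \omega_n^{\leq n+1} \to \omega_n$ gives us. The cofinality condition $x \leq F(\langle x\rangle)$ together with monotonicity along $\trianglelefteq$ means that for any ordinal $\alpha < \omega_n$ the images $F(\langle x\rangle)$ for $x \geq \alpha$ remain cofinal, so one can extract, for each coordinate slot, arbitrarily large finite sets on which $F^*$ lands inside a product of the form $\prod_{i<n+1} A_i$ with each $A_i \subseteq \omega_n$ of prescribed finite size. The monotonicity of $F$ is what lets me convert ``$c\circ F^*$ is constant on $\omega_n^{\llbracket n+1\rrbracket}$'' into ``$c$ is constant on a genuine product of large finite sets''. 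I would make this precise by choosing, for each target size $m_k$ from Lemma~\ref{coloring_lemma}, increasing tuples $\sigma \in \omega_n^{\llbracket n+1\rrbracket}$ whose $F^*$-images sweep out a product $\prod_{i<n+1} A_i$ with $|A_i| = m_k$; this uses directedness of $\omega_n$ repeatedly to push each coordinate up past the previous choices.

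Once that reduction is in place, the contradiction is immediate: if $c\circ F^*$ were constant, then $c$ would take a single value on $\prod_{i<n+1} A_i$, so $|c[\prod_{i<n+1} A_i]| = 1$, contradicting Lemma~\ref{coloring_lemma} (applied with the dimensional index shifted appropriately, $n+1$ coordinates corresponding to the $n$ of that lemma plus one), which guarantees $|c[\prod A_i]| > k$ for $A_i$ of size $m_k$ once $k \geq 1$. Thus no $(n+1)$-cofinal $F$ can make $c\circ F^*$ constant, and $\mathrm{PH}_n(\omega_n)$ fails.

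The main obstacle I anticipate is the bookkeeping in the reduction step: matching the index conventions between the $\trianglelefteq$-increasing tuples in $\omega_n^{\llbracket n+1\rrbracket}$ (whose $i$-th component lives in $\omega_n^{i+1}$) and the flat product structure $\omega_n^{n+1}$ on which $c$ is defined, while ensuring that the finite sets $A_i$ produced are genuinely the image coordinates of a single product rather than merely a diagonal. Concretely, I must verify that by feeding $F$ suitably chosen nested subsequences I really do realize every element of $\prod_{i<n+1} A_i$ as $F^*(\sigma)$ for some increasing $\sigma$, so that constancy of $c\circ F^*$ transfers to constancy of $c$ on the full product and not just on a sparse subset. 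This is where the cofinality and monotonicity axioms of $F$ must be used carefully; everything else is a direct appeal to the two lemmas.
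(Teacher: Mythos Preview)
The obstacle you flag at the end is not just bookkeeping; it is fatal. The image of $F^*$ cannot in general contain a product $\prod_{i\leq n} A_i$ with each $|A_i|\geq 2$. Recall that for $\sigma\in\omega_n^{\llbracket n+1\rrbracket}$ the top term $\sigma(n)$ lies in $\omega_n^{n+1}$, and $\sigma(n-1)$ must be obtained from $\sigma(n)$ by deleting exactly one entry. Hence if $F$ restricted to $\omega_n^{n+1}$ is injective (which is perfectly compatible with $(n+1)$-cofinality), then for a fixed value of the last coordinate $F(\sigma(n))$ there are at most $n+1$ possible values for the penultimate coordinate $F(\sigma(n-1))$. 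So constancy of $c\circ F^*$ does not transfer to constancy of $c$ on any genuine product of the size Lemma~\ref{coloring_lemma} requires, and your reduction collapses.

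The paper's proof avoids products altogether and goes straight to Lemma~\ref{difference_lemma}. It uses the coloring $c:(\omega_n)^{n+1}\to (n+2)\times\omega$ from the proof of Lemma~\ref{coloring_lemma}, whose first coordinate records the index $i$ at which the distinguished element $a_{\vec\alpha}(\ast)$ sits. Assuming $c\circ F^*$ is constant with value $(i^*,k^*)$, one first cites \cite[Lemma~7.8]{bbmt} to arrange that $F$ is strictly increasing, forcing $F^*(\sigma)$ to be injective and strictly increasing, hence $i^*\leq n$. Then one writes down two explicit $\sigma_0,\sigma_1\in\omega_n^{\llbracket n+1\rrbracket}$ that agree in every slot except the $i^*$-th, and whose $i^*$-th entries are chosen so that $F(\sigma_0(i^*))\neq F(\sigma_1(i^*))$. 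The tuples $a_0:=F^*(\sigma_0)$ and $a_1:=F^*(\sigma_1)$ then satisfy $a_0\setminus\{a_0(\ast)\}=a_1\setminus\{a_1(\ast)\}$ and $a_0(\ast)\neq a_1(\ast)$, so Lemma~\ref{difference_lemma} gives $c_n(a_0)\neq c_n(a_1)$, contradicting constancy. The point is that you only need \emph{two} carefully aligned elements of the domain, not a whole product.
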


\begin{proof}
  As noted in \cite{bbmt}, the identity function $c:\omega \rightarrow \omega$ readily 
  witnesses the failure of $\mathrm{PH}_0(\omega)$. Therefore, fix 
  $1 \leq n < \omega$, and let $c\colon (\omega_n)^{n+1} \rightarrow (n+2) \times \omega$ 
  be the function defined in the proof of Lemma~\ref{coloring_lemma}. 
  
  We claim that $c$ witnesses the failure of $\mathrm{PH}_n(\omega_n)$. Towards a 
  contradiction, suppose that $F\colon (\omega_n)^{n+1} \rightarrow \omega_n$ is an 
  $(n+1)$-cofinal function such that $c \circ F^*$ is constant, taking value 
  $(i^*,k^*) \in (n+2) \times \omega$. As shown in \cite[Lemma 7.8]{bbmt}, we may 
  assume that $F$ is \emph{strictly increasing}, i.e., $F(\vec{x}) \lneq F(\vec{y})$ 
  for all $\vec{x} \triangleleft \vec{y}$ in $(\omega_n)^{\leq n+1}$. In particular, 
  we can assume that $F^*(\sigma) = F \circ \sigma$ is \emph{injective} for all 
  $\sigma \in \omega_n^{\llbracket n+1 \rrbracket}$, and hence, recalling the 
  definition of $c$, we know that $i^* \neq n+1$. In addition, since 
  $F \circ \sigma$ is strictly increasing for all $\sigma \in \omega_n^{\llbracket 
  n+1 \rrbracket}$, the definition of $c$ implies that we in fact have $i^* < n$.
  
  Let $\alpha^* := F(\langle 0,1, \ldots, i^* \rangle) + 1$. Now define 
  $\sigma_0, \sigma_1 \in \omega_n^{\llbracket n+1 \rrbracket}$ as follows:
  \begin{itemize}
    \item for all $i < i^*$, $\sigma_0(i) = \sigma_1(i) := \langle 0,1, \ldots, i \rangle$;
    \item $\sigma_0(i^*) := \langle 0,1,\ldots,i^* \rangle$;
    \item $\sigma_1(i^*) := \langle 0,1,\ldots,i^*-1,\alpha^*\rangle$;
    \item for all $\ell < n - i^*$, $\sigma_0(i^*+\ell+1) = \sigma_1(i^* + \ell + 1) := 
    \langle 0,1,\ldots,i^*,\alpha^*, \alpha^*+1, \ldots, \alpha^* + \ell \rangle$. 
  \end{itemize}
  Let $a_0:=F^*(\sigma_0)$ and $a_1:=F^*(\sigma_1)$. Since $\sigma_0$ and $\sigma_1$ 
  only differ in their $i^*$-th entry and $F^*(\sigma_0)$ and $F^*(\sigma_1)$ are 
  strictly increasing, we have $a_0(i) = a_1(i)$ for all $i \in (n+1) 
  \setminus \{i^*\}$. By the definition of $c$ and the fact that $c \circ F^*$ is constant 
  with value $(i^*, k^*)$, it follows that $a_0(\ast) = a_1(\ast) = i^*$ (recall the 
  notation from the paragraph preceding Lemma~\ref{difference_lemma}). 
  Moreover, since $\alpha^* > F(\sigma_0(i^*))$ and $F$ is $(n+1)$-cofinal, 
  we know that 
  \[
    a_0(i^*) = F(\sigma_0(i^*)) < \alpha^* \leq F(\sigma_1(i^*)) = a_1(i^*).
  \] 
  Then Lemma~\ref{difference_lemma} implies that $c_n(a_0) \neq c_n(a_1)$. However, 
  again by the fact that $c \circ F^*$ is constant with value $(i^*, k^*)$, it must be 
  the case that $c_n(a_0) = c_n(a_1) = k^*$, which is a contradiction.
\end{proof}

\section{Forcing $\mathsf{PG}_d$}
\label{Sec:PG}

We now finish the proof of Theorem~A by proving part $(3)$. We show that adding $\beth_{d-1}^+$-many Cohen reals to any model of ZFC yields a model of $\mathsf{PG}_d(\aleph_0)$. In particular, by starting with a model of GCH, Theorem \ref{Thm:DDFbad} is consistently sharp. We will need the notion of \emph{uniform $n$-dimensional $\Delta$-system} isolated 
in \cite{higher_delta_systems}. We begin by recalling the relevant definitions.

\begin{definition} \label{aligned_def}
	Suppose that $a$ and $b$ are sets of ordinals.
	\begin{enumerate}
		\item We say that $a$ and $b$ are \emph{aligned} if $\otp(a) = \otp(b)$ and $\otp(a \cap \gamma) = \otp(b \cap \gamma)$ for all
		$\gamma \in a \cap b$.
		In other words, if $\gamma$ is a common element of two aligned sets $a$ and $b$, then it
		occupies the same relative position in both $a$ and $b$.
		\item If $a$ and $b$ are aligned then we let $\mb{r}(a,b) :=
		\{i < \otp(a) : a(i) = b(i)\}$. Notice that, in this case,
		$a \cap b = a[\mb{r}(a,b)] = b[\mb{r}(a,b)]$.
	\end{enumerate}
\end{definition}

\begin{definition} \label{delta_system_def}
	Suppose that $H$ is a set of ordinals, $n$ is a positive integer, and $u_b$ is a set of ordinals
	for all $b \in [H]^n$.
	We call $\langle u_b : b \in [H]^n \rangle$ a \emph{uniform
		$n$-dimensional $\Delta$-system} if there are an ordinal $\rho$ and, for
	each $\mb{m} \subseteq n$, a set $\mb{r}_{\mb{m}} \subseteq \rho$
	satisfying the following statements.
	\begin{enumerate}
		\item $\otp(u_b) = \rho$ for all $b \in [H]^n$.
		\item For all $a,b \in [H]^n$ and $\mb{m} \subseteq n$, if $a$ and $b$ are aligned with $\mb{r}(a,b) = \mb{m}$,
		then $u_a$ and $u_b$ are aligned with $\mb{r}(u_a, u_b) = \mb{r}_{\mb{m}}$.
		\item For all $\mb{m}_0, \mb{m}_1 \subseteq n$, we have
		$\mb{r}_{\mb{m}_0 \cap \mb{m}_1} = \mb{r}_{\mb{m}_0} \cap \mb{r}_{\mb{m}_1}$.
	\end{enumerate}
\end{definition}

The following is a corollary of the main lemma of \cite{higher_delta_systems}.

\begin{corollary}\cite[Corollary 3.16]{higher_delta_systems} \label{aleph_1_cor}
	Suppose that $1 \leq n < \omega$, and let $\mu = \beth_{n-1}^+$.
	If $\left\langle u_b : b \in [\mu]^n \right\rangle$ is a sequence of finite
	sets of ordinals and $g:[\mu]^n \rightarrow \omega$ is a function, then there is
	$H \in [\mu]^{\aleph_1}$ such that $\left\langle u_b : b \in [H]^n \right\rangle$
	is a uniform $n$-dimensional $\Delta$-system and $g \restriction [H]^n$ is
	constant.
\end{corollary}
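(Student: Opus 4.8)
The plan is to derive the statement from the Erd\H{o}s--Rado partition theorem in the instance $\beth_{n-1}^+ \to (\aleph_1)^n_{\aleph_0}$, which is exactly calibrated to the cardinal $\mu = \beth_{n-1}^+$ and to the $n$-dimensional, $\aleph_0$-colored setting at hand (for $n=1$ this is just the pigeonhole principle on $\aleph_1$). The idea is to fold all of the desired structure --- constancy of $g$, constancy of $\otp(u_b)$, and the alignment data of the $\Delta$-system --- into a single coloring of $[\mu]^n$ with countably many colors, and then extract a homogeneous $H$ of size $\aleph_1$.

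First I would define a coloring $c \colon [\mu]^n \to \omega$ that records, for each $b = \{b(0) < \dots < b(n-1)\} \in [\mu]^n$: the value $g(b)$; the order type $\rho_b := \otp(u_b)$; and the \emph{local configuration} of $u_b$ relative to $b$, i.e.\ the function assigning to each $j < \rho_b$ the position of $u_b(j)$ among the elements of $b$ --- whether $u_b(j) = b(i)$ for some $i < n$, or $u_b(j)$ lies strictly below $b(0)$, strictly above $b(n-1)$, or in one of the gaps $(b(i), b(i+1))$. All of this is a finite amount of data, so $c$ takes values in a countable set. Applying Erd\H{o}s--Rado yields $H \in [\mu]^{\aleph_1}$ on which $c$ is constant; in particular $g \restriction [H]^n$ is constant and $\otp(u_b) = \rho$ is fixed, giving clause (1) of Definition~\ref{delta_system_def}.

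The substantial work is to promote homogeneity of the \emph{one-variable} local configuration to the genuinely \emph{two-variable} alignment and root conditions (2) and (3). Here I would prove a rigidity/canonization lemma on $H$: after a further thinning, every element of $u_b$ not forced equal to some $b(i)$ falls into one of two kinds --- a \emph{constant core} of ordinals lying below $\min(H)$ that is literally independent of $b$, and \emph{mobile} elements that strictly increase with a specific coordinate $b(i)$. The point is that a mobile element tied to coordinate $i$ can never coincide across two tuples differing in coordinate $i$, while core elements and elements equal to $b(i)$ coincide for aligned $a,b$ precisely when $i \in \mb{r}(a,b)$. With this in hand one defines, for each $\mb{m} \subseteq n$, the set $\mb{r}_{\mb{m}} \subseteq \rho$ to consist of those positions $j < \rho$ whose controlling coordinate lies in $\mb{m}$, with all core positions included; one then checks that aligned $a,b$ with $\mb{r}(a,b) = \mb{m}$ yield $\mb{r}(u_a,u_b) = \mb{r}_{\mb{m}}$, which is clause (2), and that $\mb{m} \mapsto \mb{r}_{\mb{m}}$ commutes with intersection because ``the controlling coordinate lies in $\mb{m}$'' is an intersection-respecting condition, which is clause (3).

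The main obstacle is precisely this canonization step: homogeneity of a coloring of $n$-tuples controls only the type of a single $b$, whereas alignment of $u_a$ and $u_b$ is an assertion about \emph{pairs}, which naively would require homogeneity on $(2n - |\mb{m}|)$-tuples and hence the far larger cardinal $\beth_{2n-1}^+$. Extracting the pairwise information at the sharp cardinal $\beth_{n-1}^+$ is exactly the content of the main lemma of \cite{higher_delta_systems}, and the rigidity argument sketched above --- showing that the ``free'' elements of $u_b$ are either globally constant or strictly monotone in a single coordinate --- is the device that makes the reduction to an $n$-dimensional partition relation possible. I would either invoke that lemma directly or reconstruct the canonical-Ramsey-style argument underlying it.
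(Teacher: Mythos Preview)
The paper does not give its own proof of this statement: it is stated as \cite[Corollary 3.16]{higher_delta_systems} and is introduced with the sentence ``The following is a corollary of the main lemma of \cite{higher_delta_systems}.'' There is nothing to compare against beyond that citation.

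That said, your sketch is an accurate outline of how the result is obtained. The calibration $\beth_{n-1}^+ \to (\aleph_1)^n_{\aleph_0}$ via Erd\H{o}s--Rado is correct, and folding $g$, $\otp(u_b)$, and the local order-configuration of $u_b$ relative to $b$ into a single countable coloring is exactly the right first move. You also correctly isolate the real content: passing from homogeneity of a one-tuple invariant to the two-tuple alignment conditions (2)--(3) of Definition~\ref{delta_system_def} at the sharp cardinal, rather than at $\beth_{2n-1}^+$, is precisely what the main lemma of \cite{higher_delta_systems} accomplishes via a canonization argument of the ``constant vs.\ strictly-monotone-in-one-coordinate'' type you describe. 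Your final paragraph is honest about this: you would either invoke that lemma or reprove it, and the paper itself simply invokes it.
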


We now turn to the proof of Theorem A(3). Recall that every perfect Polish space contains a dense $G_\delta$ subspace homeomorphic to Baire space ${}^\omega \omega$. Therefore to show that $\mathsf{PG}_d(\aleph_0)$ holds, one may assume that each $X_i$ is the Baire space. It will be helpful to write $X_i = [T_i]$, where each $T_i$ is a copy of the tree ${}^{{<}\omega}\omega$.
\iffalse
\begin{theorem} 
	\label{Thm:PG_Consistent}
	Suppose that $1 \leq d < \omega$, and let $\bb{P} = \mathrm{Add}(\omega, 
	\beth_{d-1}^+)$ be the forcing to add $\beth_{d-1}^+$-many Cohen 
	reals. Then, in $V^{\bb{P}}$, for every $(d)$-sequence $\vec{X}$ of perfect 
	trees and every coloring $c:\prod_{i \leq n} T_i \rightarrow \omega$, there are 
	sets $\langle X_i \mid i \leq n \rangle$ such that
	\begin{itemize}
		\item for all $i \leq n$, $X_i$ is a somewhere dense subset of $[T_i]$;
		\item $c \restriction \prod_{i \leq n} X_i$ is constant.
	\end{itemize}
\end{theorem}
\fi
Let $\theta := \beth_{d-1}^+$, and let $\bb{P} = \mathrm{Add}(\omega, \beth_{d-1}^+)$ be the forcing to add $\beth_{d-1}^+$-many Cohen reals. We think of the conditions in $\bb{P}$ as being all
finite partial functions $p:\theta \rightarrow \prod_{i < d} T_i$, and 
$q \leq_{\bb{P}} p$ if and only if $\dom(q) \supseteq \dom(p)$ and, for 
all $\alpha \in \dom(p)$ and $i < d$, we have $p(\alpha)(i) \leq_{T_i} 
q(\alpha)(i)$. For each $\alpha < \theta$ and $i \leq n$, let $\dot{x}^\alpha_i$ 
be the canonical $\bb{P}$-name for $\{p(\alpha)(i) \mid p \in \dot{G}\}$, where 
$\dot{G}$ is the canonical $\bb{P}$-name for the generic filter. By standard 
arguments, $\dot{x}^\alpha_i$ is forced to be an element of $[T_i]$ (as defined 
in $V^\bb{P}$).

For each $p \in \bb{P}$, we define a ``collapsed" version of $p$, denoted $\bar{p}$, 
as follows. Let $\ell := |\dom(p)|$, and enumerate $\dom(p)$ in increasing order 
as $\langle \alpha_k : k < \ell \rangle$. Then define $\bar{p} : \ell \rightarrow 
\prod_{i < d} T_i$ by letting $\bar{p}(k) = p(\alpha_k)$ for all $k < \ell$. 
Note that each collapsed condition is a function from some natural number to a 
countable set, so there are only countably many such collapsed conditions.

Let $\dot{c}_0$ be a $\bb{P}$-name for a function from $\prod_{i < d} [T_i] 
\rightarrow \omega$. We will really only be interested in the values of $\dot{c}_0$ on 
$(n+1)$-tuples of the generic branches $\dot{x}^\alpha_i$, so let $\dot{c}$ be a $\bb{P}$-name for a 
function from $[\theta]^{d}$ to $\omega$ defined in the following way: for all 
$(\alpha_0, \ldots, \alpha_{d-1}) \in [\theta]^{d}$, let $\dot{c}(\alpha_0, \ldots, 
\alpha_{d-1}) = \dot{c}_0(\dot{x}^{\alpha_0}_0, \dot{x}^{\alpha_1}_1, \ldots, 
\dot{x}^{\alpha_{d-1}}_{d-1})$ (recall our convention that the notation $(\alpha_0, \ldots, 
\alpha_{d-1}) \in [\theta]^{d}$ implies that $\alpha_0 < \alpha_1 < \ldots < \alpha_{d-1}$).

Fix an arbitrary $p \in \bb{P}$. We will find $q \leq_\bb{P} p$ and $j < \omega$ such 
that $q$ forces the existence of a sequence $\langle Y_i \mid i < d \rangle$ 
such that each $Y_i$ is a somewhere dense subset of $[T_i]$ and $c \restriction 
\prod_{i < d} Y_i$ is constant, taking value $j$.

For each $a\in [\theta]^{d}$, find a condition $q_a
\leq p$ and a $j_a < \omega$ such that $q_a \Vdash_{\bb{P}} ``\dot{c}(a) = j_a"$. 
Let $u_a := \dom(q_a)$. Without loss of generality, assume that $a \subseteq 
u_a$ for every $a \in [\theta]^{d}$.

By Corollary \ref{aleph_1_cor}, we can find $H \in [\theta]^{\aleph_1}$, 
a ``collapsed" condition $\bar{q}^*$, natural numbers $j^*$ and $\rho$, and 
a set $\mb{r}^* \in [\rho]^{d}$ such that
\begin{itemize}
	\item for all $a \in [H]^{d}$, we have $\bar{q}_a = \bar{q}^*$ and $j_a = j^*$
	\item $\langle u_a : a \in [H]^{d} \rangle$ forms a 
	uniform $d$-dimensional $\Delta$-system; and
	\item for all $a \in [H]^{d}$, we have $|u_a| = \rho$ and $a = u_a[\mb{r}^*]$.
\end{itemize}
By taking an initial segment if necessary, assume that $\otp(H) = \omega_1$.
Let $\langle \mb{r}_{\mb{m}} : \mb{m} \subseteq d \rangle$ 
witness the fact that $\langle u_a : a \in [H]^{d} \rangle$ is 
a uniform $d$-dimensional $\Delta$-system. For each $m < d$ and each 
$a \in [H]^m$, define $u_a$ and $q_a$ by choosing any $b \in [H]^{d}$ for 
which $b[m] = a$ and setting $u_a := u_b[\mb{r}_m]$ and $q_a := q_b \restriction 
u_a$. By our uniformization of $H$ (cf.\ \cite[Lemma 2.3]{svhdlwlc}), 
these definitions are independent of our choice of $b$. 

Let $q := q_\emptyset$. Since $q_a \leq_{\bb{P}} p$ for every $a \in [H]^{d}$, it follows 
that $\dom(p) \subseteq u_\emptyset$ and hence $q_\emptyset \leq p$.
Also, for each $i < d$, let $s_i := \bar{q}(\mb{r}^*(i))(i)$. 
In other words, $s_i \in T_i$ is such that, for all $a = (\alpha_0, \ldots, \alpha_{d-1})$ 
in $[H]^{d}$, we have $q_a(\alpha_i)(i) = s_i$. We claim that $q$ is as desired; 
in particular, $q$ forces the existence 
of a sequence of sets $\langle Y_i : i <d \rangle$ such that 
\begin{itemize}
	\item for all $i < d$, $Y_i$ is $s_i$-dense in $[T_i]$, i.e., for all $t \geq_{T_i} s_i$, 
	there is $y \in Y_i$ such that $t \in y$;
	\item $\dot{c}_0 \restriction \prod_{i < d} Y_i$ is constant, taking value $j^*$.
\end{itemize}

\begin{claim} \label{predense_claim}
	Suppose that $m < d$, $a \in [H]^m$, and $\gamma \in H \setminus (\max(a)+1)$. 
	Then the set $D_{a, \gamma} := \{ q_{a ^\frown \langle \beta \rangle} : 
	\beta \in H \setminus \gamma \}$ is pre-dense below $q_a$ in $\bb{P}$.
\end{claim}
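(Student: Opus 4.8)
The plan is to show that any condition $r \leq_{\bb{P}} q_a$ is compatible with some element of $D_{a,\gamma}$. Fix such an $r$. First I would observe that $\dom(r)$ is finite, so I can choose $\beta \in H \setminus \gamma$ large enough that $\beta \notin \dom(r)$ and, more importantly, that the ``new'' part of $u_{a^\frown \langle \beta \rangle}$ — the ordinals in $u_{a^\frown \langle \beta \rangle} \setminus u_a$ — lies entirely above $\sup(\dom(r))$ (or at least avoids $\dom(r)$). This is where the uniform $\Delta$-system structure does the work: as $\beta$ ranges over $H \setminus \gamma$, the conditions $q_{a^\frown \langle \beta \rangle}$ all restrict to $q_a$ on the common ``root'' $u_a$, and their symmetric difference from $u_a$ can be pushed arbitrarily high in the ordinals. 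Since $H$ has order type $\omega_1$ and each $u_b$ is finite, only finitely many values of $\beta$ can produce extra coordinates meeting the finite set $\dom(r)$, so a suitable $\beta$ exists.

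Next I would verify compatibility of $r$ with $q_{a^\frown\langle\beta\rangle}$. The key point is that $q_{a^\frown\langle\beta\rangle}$ and $q_a$ agree on $u_a$ (they are both restrictions of a common $q_b$, or by the independence-of-choice-of-$b$ remark they literally share the collapsed structure on the root), and $q_a \leq_{\bb{P}} r$'s relevant coordinates are already compatible because $r \leq_{\bb{P}} q_a$. On $\dom(q_{a^\frown\langle\beta\rangle}) \setminus u_a$, the domain is disjoint from $\dom(r)$ by the choice of $\beta$, so there is no conflict there. On the overlap $u_a \cap \dom(r)$, both $r$ and $q_{a^\frown\langle\beta\rangle}$ extend $q_a$ (the former by hypothesis, the latter because $q_{a^\frown\langle\beta\rangle}(\alpha)(i) \geq_{T_i} q_a(\alpha)(i) = $ the shared root value), and since the tree order is a tree, two nodes both above a common node $q_a(\alpha)(i)$ need not be comparable — so I must be careful here. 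The cleanest route is to note that $q_{a^\frown\langle\beta\rangle}$ restricted to $u_a$ equals $q_a$ exactly (not merely extends it), so on $u_a \cap \dom(r)$ the condition $q_{a^\frown\langle\beta\rangle}$ agrees with $q_a$, and $r$ extends $q_a$ there, giving compatibility immediately.

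The main obstacle I anticipate is precisely this last point: ensuring that on the shared domain $u_a$ the condition $q_{a^\frown\langle\beta\rangle}$ literally coincides with $q_a$ rather than properly extending it, so that compatibility with $r$ is automatic. I would resolve this by appealing to how $q_a$ was defined — as $q_b \restriction u_a$ where $u_a = u_b[\mb{r}_m]$ — together with the alignment/uniformity properties from Definition~\ref{delta_system_def} and the value agreement $\bar{q}_b = \bar{q}^*$. These guarantee that for $b = a^\frown\langle\beta\rangle$, the restriction $q_b \restriction u_a$ returns the same collapsed pattern for every admissible $b$, hence equals $q_a$ on the nose. Granting this, the common extension $r \cup (q_{a^\frown\langle\beta\rangle} \restriction (u_{a^\frown\langle\beta\rangle} \setminus u_a))$ is a well-defined condition below both $r$ and $q_{a^\frown\langle\beta\rangle}$, establishing pre-density below $q_a$. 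I would finish by noting that since $r$ was arbitrary, $D_{a,\gamma}$ is pre-dense below $q_a$ as claimed.
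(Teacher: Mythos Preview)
Your proposal is correct and follows essentially the same approach as the paper: fix $r \leq_{\bb{P}} q_a$, use the $\Delta$-system structure on $\{u_{a^\frown\langle\beta\rangle} : \beta \in H \setminus \gamma\}$ (with root $u_a$) to find $\beta$ with $(u_{a^\frown\langle\beta\rangle} \setminus u_a) \cap \dom(r) = \emptyset$, then use $q_{a^\frown\langle\beta\rangle} \restriction u_a = q_a$ to conclude compatibility. One small wobble: your justification that ``only finitely many values of $\beta$ can produce extra coordinates meeting $\dom(r)$'' should rest on the pairwise disjointness of the sets $u_{a^\frown\langle\beta\rangle} \setminus u_a$ (so at most $|\dom(r)|$ of them can meet $\dom(r)$), not merely on the finiteness of each $u_b$; and the phrase ``pushed arbitrarily high in the ordinals'' is not what the $\Delta$-system gives you, though you immediately supply the correct argument afterward.
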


\begin{proof}
	Let $r \leq_{\bb{P}} q_a$ be arbitrary. The set $\{u_{a ^\frown \langle \beta \rangle} 
	: \beta \in H \setminus \gamma\}$ forms an uncountable $\Delta$-system with 
	root $u_a$. We can therefore find $\beta \in H \setminus \gamma$ for which 
	$\dom(r) \cap (u_{a ^\frown \langle \beta \rangle} \setminus u_a) = \emptyset$. 
	We also know that $q_{a ^\frown \langle \beta \rangle} \restriction u_a = q_a$, 
	and $r \leq_{\bb{P}} q_a$. It follows that $r$ and 
	$q_{a ^\frown \langle \beta \rangle}$ are compatible in $\bb{P}$, so $D_{a, \gamma}$ 
	is indeed pre-dense below $q_a$.
\end{proof}

Now let $G$ be $\bb{P}$-generic over $V$ with $q \in G$, 
and let $c$ be the realization of $\dot{c}$ in 
$V[G]$. By recursively applying Claim \ref{predense_claim} $d$-many times, 
we can find a set $\delta \in [H]^{d}$, enumerated in increasing order as 
$\langle \delta_0, \ldots, \delta_{d-1} \rangle$ such that 
\begin{itemize}
	\item for all $i < d-1$, $H \cap (\delta_i, \delta_{i+1})$ is infinite;
	\item $H \cap \delta_0$ is infinite;
	\item $q_\delta \in G$.
\end{itemize}
Let $H_0$ denote the set of the first $\omega$-many elements of $H$, and for 
$i < d-1$, let $H_{i+1}$ denote the set of the first $\omega$-many elements of 
$H \cap (\delta_i, \delta_{i+1})$. Note that each $H_i$ is an element of $V$.
Now, working in $V[G]$, we will recursively construct a matrix of ordinals 
$\langle \alpha_{i, k} : i < d, ~ k < \omega \rangle$ such that, setting 
$A_i := \{\alpha_{i,k} : k < \omega \rangle$ and 
$Y_i := \{x^\alpha_i : \alpha \in A_i\}$ for all $i < d$, we have the 
following:
\begin{itemize}
	\item for all $i < d$, $A_i \subseteq H_i \cup \{\delta_i\}$;
	\item for all $i < d$, $Y_i$ is $s_i$-dense in $[T_i]$;
	\item for all $a \in \prod_{i < d} A_i$, we have $q_a \in G$, and hence 
	$c \restriction \prod_{i < d} Y_i$ is constant, taking value $j^*$.
\end{itemize}
The construction of the matrix of ordinals is by recursion on $k < \omega$ and, 
for fixed $k$, by recursion on $i < d$; in other words, the construction 
is by recursion on the anti-lexicographic ordering of $d \times \omega$. 

For each pair $(i, k) \in d \times \omega$ and each $j < d$, let 
$A_j \restriction (i,k)$ be the portion of $A_j$ constructed before stage 
$(i,k)$ of the process, i.e., $A_j \restriction (i,k) = \{\alpha_{j,\ell} : 
\ell \leq k\}$ if $j < i$ and $A_j \restriction (i,k) = \{\alpha_{j, \ell} :
\ell < k\}$ if $j \geq i$. Our recursion hypothesis will be the assumption 
that $q_a \in G$ for all $a \in \prod_{j \leq k} A_j \restriction (i,k)$ by 
the time we have reached stage $(i,k)$ of the construction. Enumerate 
${^{<\omega}\omega}$ as $\langle t_k \mid k < \omega \rangle$, with 
$t_0 = \emptyset$. We will also 
maintain the requirement that, for all $(i,k) \in d \times \omega$, 
$x^{\alpha_{i,k}}_i$ extends $s_i{}^\frown t_k$; this is what will ensure that 
$Y_i$ is $s_i$-dense in $[T_i]$.

Begin by setting $\alpha_{i, 0} := \delta_i$ for all $i \leq n$. The fact that 
$q_\delta \in G$ ensures that this satisfies the recursion hypotheses. Now suppose that 
$(i,k) \in d \times \omega$, with $i \geq 1$, and we have reached stage 
$(i,k)$ of the construction. Let 
\[
r_{i,k} = \bigcup \{q_a : a \in \prod_{j < d} A_j \restriction (i,k)\}.
\]
By our recursion hypothesis, $r_{i,k}$ is a condition in $\bb{P}$ and is in fact 
in $G$. Let $B_0 := \prod_{j < i}A_j \restriction (i,k)$ 
and $B_1 := \prod_{i < j < d} A_j \restriction (i,k)$. Note that both 
$B_0$ and $B_1$ are in $V$, as they are finite sets of finite sequences of 
ordinals. For each $\alpha \in H_i$, let 
\[
q_\alpha^* := \bigcup \{q_{b_0{}^\frown \langle \alpha \rangle ^\frown b_1} 
: b_0 \in B_0, ~ b_1 \in B_1\}.
\]
Notice that, for all $b_0, b'_0 \in B_0$ and $b_1, b'_1 \in B_1$, 
$b_0{}^\frown \langle \alpha \rangle ^\frown b_1$ and $b'_0{}^\frown \langle \alpha 
\rangle ^\frown b'_1$ are aligned; it follows that $q^*_\alpha$ is a condition in 
$\bb{P}$. Moreover, for all $(b_0, b_1) \in B_0 \times B_1$, we have 
$q_{b_0{}^\frown \langle \alpha \rangle ^\frown b_1}(\alpha)(i) = s_i$, and 
hence $q^*_\alpha(\alpha)(i) = s_i$. We can therefore extend $q^*_\alpha$ to a 
condition $q^{**}_\alpha$ with the same domain by letting $q^{**}_\alpha(\alpha)(i) 
= s_i{}^\frown t_k$ and $q^{**}_\alpha(\eta)(j) = q^*_\alpha(\eta)(j)$ for all 
$(\eta, j) \in (\dom(q^*_\alpha) \times d) \setminus \{(\alpha, i)\}$.
\begin{claim}
	The set $E := \{q^{**}_\alpha : \alpha \in H_i \setminus \{\alpha_{i, \ell} \mid 
	\ell < k\}\}$ is predense in $\bb{P}$ below $r_{i,k}$.
\end{claim}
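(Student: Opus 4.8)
The plan is to prove predensity directly, in the spirit of Claim~\ref{predense_claim}, but accounting for the more intricate shape of the conditions $q^{**}_\alpha$. Fix an arbitrary $r \leq_{\bb{P}} r_{i,k}$; I will produce some $\alpha \in H_i \setminus \{\alpha_{i,\ell} : \ell < k\}$ for which $q^{**}_\alpha$ is compatible with $r$. The guiding picture is that, as $\alpha$ ranges over $H_i$, the conditions $q^*_\alpha$ share a fixed ``root'' portion $R$ of their domains, on which they all agree with $r_{i,k}$, while the remaining ``petal'' portions $\dom(q^*_\alpha) \setminus R$ are pairwise disjoint. Since $\dom(r)$ is finite, all but finitely many $\alpha$ have petal disjoint from $\dom(r)$, and any such $\alpha$ (other than the already-used ones) will work.

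\emph{Isolating the root.} I would set $R := \bigcup\{u_{b_0 {}^\frown \langle \alpha \rangle {}^\frown b_1}[\mathbf{r}_{d \setminus \{i\}}] : (b_0,b_1) \in B_0 \times B_1\}$ and first observe, using Definition~\ref{delta_system_def}(2) together with the fact that $\bar q_a = \bar q^*$ for all $a \in [H]^{d}$, that $R$ does not depend on $\alpha$ and that $q^*_\alpha$ agrees on $R$ with $r_{i,k}$. Moreover, since $k \geq 1$ there is an already-chosen ordinal $\alpha_{i,0} = \delta_i \in A_i {\restriction} (i,k)$ lying in coordinate~$i$; comparing the tuple $b_0 {}^\frown \langle \alpha \rangle {}^\frown b_1$ with $b_0 {}^\frown \langle \delta_i \rangle {}^\frown b_1 \in \prod_{j<d} A_j{\restriction}(i,k)$ via Definition~\ref{delta_system_def}(2) shows that $R \subseteq \dom(r_{i,k}) \subseteq \dom(r)$, and that on $R$ the values of $q^*_\alpha$ coincide with those of $r_{i,k}$.

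\emph{Disjointness of petals.} This is the crux. The block structure $A_0 < A_1 < \cdots < A_{d-1}$ --- guaranteed by $H_0 < \delta_0 < H_1 < \delta_1 < \cdots$ and $A_j \subseteq H_j \cup \{\delta_j\}$ --- ensures that in every tuple $b_0 {}^\frown \langle \alpha \rangle {}^\frown b_1$ the ordinal $\alpha$ is exactly the $i$-th element. Hence, for $\alpha \neq \alpha'$ and any $(b_0,b_1),(b_0',b_1')$, the $d$-tuples $a = b_0{}^\frown\langle\alpha\rangle{}^\frown b_1$ and $a' = b_0'{}^\frown\langle\alpha'\rangle{}^\frown b_1'$ are aligned with $i \notin \mathbf{r}(a,a')$, i.e.\ $\mathbf{r}(a,a') \subseteq d \setminus \{i\}$. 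Definition~\ref{delta_system_def}(2) then gives $u_a \cap u_{a'} = u_a[\mathbf{r}_{\mathbf{r}(a,a')}]$, while Definition~\ref{delta_system_def}(3) yields the monotonicity $\mathbf{r}_{\mathbf{r}(a,a')} \subseteq \mathbf{r}_{d \setminus \{i\}}$, placing this intersection inside $R$. Taking unions over $(b_0,b_1)$ and $(b_0',b_1')$ gives $\dom(q^*_\alpha) \cap \dom(q^*_{\alpha'}) \subseteq R$, so the petals are pairwise disjoint.

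\emph{Conclusion.} Since the petals are pairwise disjoint and $\dom(r)$ is finite, I can choose $\alpha \in H_i \setminus \{\alpha_{i,\ell} : \ell < k\}$ with $(\dom(q^*_\alpha) \setminus R) \cap \dom(r) = \emptyset$. As $\alpha$ lies in its own petal, in particular $\alpha \notin \dom(r)$, so the modification defining $q^{**}_\alpha$ at $(\alpha,i)$ (setting the value to $s_i{}^\frown t_k$) causes no conflict. On $R$, the values of $q^{**}_\alpha$ agree with $r_{i,k}$, which $r$ refines since $r \leq_{\bb{P}} r_{i,k}$, so there the two conditions are comparable; off $R$ their domains are disjoint. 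Hence $q^{**}_\alpha$ and $r$ are compatible, as desired. The main obstacle is the petal-disjointness step: one must carefully verify the alignment of the $d$-tuples and combine the root-equality and intersection data of the uniform $\Delta$-system with the block placement of the $A_j$ ensuring $\alpha$ occupies coordinate~$i$; once this bookkeeping is in place, the compatibility verification is routine.
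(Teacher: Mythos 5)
Your proof is correct and follows essentially the same route as the paper's: both exploit the uniform $\Delta$-system structure to isolate a root contained in $\dom(r_{i,k})$ (on which $r \leq_{\bb{P}} r_{i,k}$ already guarantees agreement) and then use finiteness of $\dom(r)$ to pick $\alpha$ whose petal, including $\alpha$ itself, misses $\dom(r)$. The only difference is packaging --- you work with a single global root $R$ and verify petal disjointness across all pairs $(b_0,b_1)$ at once, whereas the paper treats each pair's $\Delta$-system separately and reduces to compatibility of each $q_{b_0{}^\frown\langle\alpha\rangle{}^\frown b_1}$ with $r$ --- which is not a substantive divergence.
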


\begin{proof}
	Fix an arbitrary condition $r \leq_{\bb{P}} r_{i,k}$; we will find a condition 
	in $E$ that is compatible with $r$. Let $\mb{m} := d \setminus \{i\}$, and 
	let $H^* := H_i \setminus \{\alpha_{i, \ell} \mid \ell < k\}$. 
	For each $(b_0, b_1) \in B_0 \times B_1$, the set $\{u_{b_0{}^\frown 
		\langle \alpha \rangle ^\frown b_1} : \alpha \in H^*\}$ forms a $\Delta$-system 
	whose root is equal to $v_{b_0, b_1} := u_{b_0{}^\frown \langle \alpha \rangle 
		^\frown b_1}[\mb{r}_{\mb{m}}]$ for some (and therefore all) $\alpha \in H^*$.
	Since there are only finitely many such pairs $(b_0, b_1)$ and since 
	$H^*$ is infinite, we can therefore fix $\alpha \in H^*$ such that, for all 
	$(b_0, b_1) \in B_0 \times B_1$, we have 
	\[
	(u_{b_0{}^\frown \langle \alpha \rangle ^\frown b_1} \setminus 
	v_{b_0, b_1})
	\cap \dom(r) = \emptyset.
	\]
	In particular, we have $\alpha \notin \dom(r)$.
	
	We claim that $q^{**}_\alpha$ and $r$ are compatible. Since $\alpha \notin \dom(r)$, 
	it suffices to show that $q_{b_0{}^\frown \langle \alpha \rangle ^\frown b_1}$ and 
	$r$ are compatible for all $(b_0, b_1) \in B_0 \times B_1$. Thus, fix such a 
	$(b_0, b_1)$. Note that $b_0{}^\frown \langle \alpha \rangle ^\frown b_1$ and 
	$b_0{}^\frown \langle \delta_i \rangle ^\frown b_1$ are aligned and that 
	$q_{b_0{}^\frown \langle \alpha \rangle ^\frown b_1} \restriction v_{b_0, b_1} 
	= q_{b_0{}^\frown \langle \delta_k \rangle ^\frown b_1} \restriction v_{b_0, b_1}$. 
	Since $r_{i,k} \leq_{\bb{P}} q_{b_0{}^\frown \langle \delta_k \rangle ^\frown b_1}$ 
	and $r \leq_{\bb{P}} r_{i,k}$, we know that $r$ is compatible with 
	$q_{b_0{}^\frown \langle \alpha \rangle ^\frown b_1} \restriction v_{b_0, b_1}$, 
	and since $u_{b_0{}^\frown \langle \alpha \rangle ^\frown b_1} \setminus v_{b_0, 
		b_1}$ is disjoint from $\dom(r)$, it follows that $r$ is compatible with 
	$q_{b_0{}^\frown \langle \alpha \rangle ^\frown b_1}$ and therefore with 
	$q^{**}_\alpha$.
\end{proof}
By the claim and the fact that $r_{i,k} \in G$,
we can fix an $\alpha_{i, k} \in H_i \setminus \{\alpha_{i,\ell} : \ell < k\}$ 
such that $q^{**}_{\alpha_{i,k}} \in G$ and proceed to the next stage of the 
recursive construction. At the end of the construction, our recursion hypothesis 
ensures that, for all $a \in \prod_{i < d} A_i$, we have $q_a \in G$ and hence 
$c \restriction \prod_{i < d} A_i$ is constant, taking value $j^*$. It follows 
that $c_0 \restriction \prod_{i < d} Y_i$ is constant, also taking value $j^*$. 
Finally, our construction ensures that for every $i < d$ and every $t \in 
{^{<\omega}} \omega$, there is $x \in Y_i$ extending $s_i{}^\frown t$, and hence 
$Y_i$ is $s_i$-dense in $[T_i]$, as desired. This completes the proof of Theorem A(3).

\begin{corollary}
	For every $1 \leq d < \omega$, it is consistent that $2^{\aleph_0} = 
	\aleph_{d}$ and, for every $d$-sequence $\vec{X}$ of perfect Polish spaces, 
	$\mathsf{DDF}(\vec{X})$ is weakly $\omega$-partition regular.
\end{corollary}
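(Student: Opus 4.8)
The plan is to realize the desired model as a Cohen extension of a ground model satisfying $\mathsf{GCH}$. Fix $1 \leq d < \omega$, let $V$ be a model of $\mathsf{GCH}$, and let $\bb{P} = \mathrm{Add}(\omega, \aleph_d)$. Under $\mathsf{GCH}$ we have $\beth_{d-1} = \aleph_{d-1}$, hence $\beth_{d-1}^+ = \aleph_d$, so $\bb{P}$ is exactly the poset appearing in Theorem~A(3) for this $d$. That theorem then gives $V^{\bb{P}} \models \PG_d(\aleph_0)$.

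It then remains to translate $\PG_d(\aleph_0)$ into the partition-regularity language and to pin down the value of the continuum. For the first point, I would invoke the fact that a somewhere dense grid is somewhere-DDF: given any $\omega$-coloring $\gamma\colon \prod_{i<d} X_i \to \omega$, a monochromatic somewhere dense grid lies inside a single fiber $\gamma^{-1}(j)$, which therefore belongs to $\DDF(\vec{X})$. Reading an $\omega$-coloring as a partition into $\omega$-many parts, this says precisely that $\DDF(\vec{X})$ is weakly $\omega$-partition regular. Thus $\PG_d(\aleph_0)$ already yields the regularity clause of the corollary for every $d$-sequence $\vec{X}$ of perfect Polish spaces, uniformly in $\vec{X}$.

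For the continuum I would run the standard Cohen-forcing computation. Since $\bb{P}$ is ccc it preserves all cardinals and cofinalities, and all of its antichains are countable. Under $\mathsf{GCH}$ we have $|\bb{P}| = \aleph_d$, so the number of antichains, and hence the number of nice names for subsets of $\omega$, is at most $\aleph_d^{\aleph_0} = \aleph_d$; here it is essential that $\cf(\aleph_d) > \omega$, which holds because $\aleph_d$ is a successor cardinal for $d \geq 1$. A nice-name count then gives $V^{\bb{P}} \models 2^{\aleph_0} \leq \aleph_d$, while the $\aleph_d$-many added Cohen reals are pairwise distinct, so $V^{\bb{P}} \models 2^{\aleph_0} \geq \aleph_d$ as well. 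Combining this with the previous paragraph finishes the proof.

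There is no genuine obstacle here, as the corollary is essentially a packaging of Theorem~A(3) with elementary cardinal arithmetic. The one point deserving care is the \emph{exact} value of the continuum: the argument needs $\aleph_d^{\aleph_0} = \aleph_d$ rather than the weaker $\aleph_d^{\aleph_0} = \aleph_d^+$ that would result were $\aleph_d$ of countable cofinality, and this is exactly where the positivity and finiteness of $d$ enter. (By contrast, adding $\beth_\omega$-many Cohen reals, as in Stefanovi\'{c}'s proof of $\mathsf{DDF}$, forces $2^{\aleph_0} = \beth_\omega^+$ and so does not give such a sharp value.)
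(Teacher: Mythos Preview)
Your proof is correct and follows exactly the approach the paper intends: start from a model of $\mathsf{GCH}$, add $\aleph_d = \beth_{d-1}^+$ Cohen reals, invoke Theorem~A(3) to obtain $\PG_d(\aleph_0)$, observe that a monochromatic somewhere dense grid is in particular somewhere-DDF, and compute the continuum via the standard nice-name count. The paper states the corollary without proof, treating it as immediate from Theorem~A(3); your write-up simply fills in the routine details.
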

\iffalse
\begin{proof}
	Fix $1 \leq n < \omega$. Assume that $\mathrm{GCH}$ holds in the ground model, 
	and force with $\mathbb{P} = \mathrm{Add}(\omega, \aleph_{n+1}) = 
	\mathrm{Add}(\omega, \beth_n^+)$. By Theorem \ref{Thm:PG_Consistent}, in 
	$V^{\mathbb{P}}$, for every $(n+1)$-sequence of perfect trees and every 
	coloring $c:\prod_{i \leq n} [T_i] \rightarrow \omega$, there are 
	sets $\langle X_i \mid i \leq n \rangle$ such that 
	each $X_i$ is a somewhere dense subset of $[T_i]$ and $c \restriction \prod_{i \leq n} X_i$ 
	is constant. In other words, if $\prod_{i \leq n} [T_i] = \bigcup_{j < \omega} P_j$ 
	is a partition, then there are a natural number $j < \omega$ and sets 
	$\langle X_i \mid i \leq n \rangle$ such that each $X_i$ is a somewhere dense subset of $[T_i]$ and 
	$\prod_{i \leq n} X_i \subseteq P_j$. It is clear that $\prod_{i \leq n} X_i$ is a somewhere 
	$(n+1)$-DDF set, so $\mathcal{D}_{n+1}(\vec{T})$ is indeed weakly $\omega$-partition regular 
	in $V^{\bb{P}}$.
\end{proof}
\fi

\section{$\mathsf{PG}_d$ and coding trees}
\label{Sec:coding}

We conclude with a brief discussion of using $\mathsf{PG}$ to prove more detailed versions of the Halpern-L\"auchli theorem pertaining to \emph{coding trees}. In recent years, coding trees in various forms have been developed in \cite{dobrinen} and \cite{zucker} to code countable structures in a finite binary language, such as the Rado graph or Henson's triangle-free graph. Halpern-L\"auchli theorems for the \emph{strict similarity types} of \cite{dobrinen} or the \emph{aged embeddings} of \cite{zucker}, proven using a Harrington-style forcing argument, form the pigeon-hole principle used to show that certain Fra\"iss\'e classes have finite big Ramsey degrees. Indeed, this is a major motivation for trying to develop new proofs of the Halpern-L\"auchli theorem, as these new proofs might generalize to previously unknown settings.

With this in mind, we show how $\mathsf{PG}$ can be used to prove a version of $\mathsf{HL}$ for coding trees of a simple form. Fix $0< d, k< \omega$, and for each $i< d$, let $T_i$ be a copy of the tree ${}^{{<}\omega}k$. We assume that $T_i\cap T_j = \emptyset$ for $i\neq j< d$. We now enrich each $T_i$ to a structure $\mathbf{T}_i$ by declaring that for each $m< \omega$, at most one node of $\bigcup_{i< d} T_i(m)$ is a \emph{coding node}. Write $\mathbf{T} = \langle \mathbf{T}_0,...,\mathbf{T}_{d-1}\rangle$ for this sequence of structures. If $\bigcup_{i< d} T_i(m)$ contains a coding node, we write $c_{\mathbf{T}}(m)$ for this node. We assume that for every $i< d$ and every $t\in T_i$, there is some $n< \omega$ so that $c_{\mathbf{T}}(n)\in T_i$ and $t\sqsubseteq c_{\mathbf{T}}(n)$. 

An \emph{embedding} of $\mathbf{T}$ into itself is an injection $f\colon \bigcup_{i< d} T_i\to \bigcup_{i< d} T_i$ satisfying the following properties:
\begin{enumerate}
	\item 
	$f[T_i]\subseteq T_i$ for each $i< d$,
	\item
	$f$ preserves tree order, meets, relative levels, and lexicographic order. Write $\tilde{f}\colon \omega\to \omega$ for the induced function on levels.
	\item
	If $c_{\mathbf{T}}(m)$ exists, then so does $c_{\mathbf{T}}(\tilde{f}(m))$, and we have $f(c_{\mathbf{T}}(m)) = c_{\mathbf{T}}(\tilde{f}(m))$.
\end{enumerate}
Write $\mathrm{Emb}(\mathbf{T}, \mathbf{T})$ for the set of embeddings of $\mathbf{T}$ into itself.

Note that if we remove the extra coding node structure and item $(3)$ from the above, then items $(1)$ and $(2)$ describe strong subtrees of the form appearing in the ordinary Halpern-L\"auchli theorem. To state the version for these coding trees, first observe that the level product $\bigotimes \mathbf{T}_i$ now contains $d+1$ different types of elements; given $(t_0,...,t_{d-1})\in \bigotimes \mathbf{T}_i$, either none of the $t_i$ is a coding node, or exactly one of the $t_i$ is a coding node. We refer to the former case as \emph{type $-1$} and the latter as \emph{type $i$} for a given $i< d$. Given $\mathsf{p}\in \{-1,...,d-1\}$, let $\mathbf{T}(\mathsf{p})\subseteq \bigotimes \mathbf{T}_i$ denote those tuples of type $\mathsf{p}$. 

\begin{theorem}
	\label{Thm:CodingTreeHL}
	Given $\mathsf{p}\in \{-1, 0,...,d-1\}$, $r< \omega$, and a coloring $\gamma\colon \mathbf{T}(\mathsf{p})\to r$, there is $f\in \mathrm{Emb}(\mathbf{T}, \mathbf{T})$ such that $f[\mathbf{T}(\mathsf{p})]\subseteq \mathbf{T}(\mathsf{p})$ is monochromatic for $\gamma$. 
\end{theorem}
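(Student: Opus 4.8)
The plan is to mimic the proof of Proposition~\ref{Prop:PGHL}: pass from a coloring of level tuples to a coloring of a product of branch spaces via a non-principal ultrafilter, apply $\mathsf{PG}_d$ to obtain a monochromatic somewhere dense grid, and then read off a self-embedding of $\mathbf{T}$ from that grid. By Remark~\ref{Rem:GDB} (and since each $T_i$ is already a copy of ${}^{{<}\omega}k$) I will work directly with the perfect Polish spaces $[T_i]$. First I would record two structural facts to be used repeatedly. Writing $L_i := \{m : c_{\mathbf{T}}(m)\in T_i\}$, the density hypothesis forces each $L_i$ to be infinite and, more importantly, forces the set $S_i$ of coding nodes of $T_i$ to be a \emph{perfect} subtree of $T_i$: above any coding node one finds, via density along two distinct immediate successors, two incomparable further coding nodes. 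Second, because each coding ``color'' $i<d$ occurs at infinitely many levels, the coding pattern of $\mathbf{T}$ is self-similar, so any strictly increasing level map $\tilde f$ required to send coding levels to coding levels of the matching color can be built greedily; there is no global obstruction to the level maps of self-embeddings.

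Fix the target type $\mathsf{p}$. The heart of the argument is the choice of branch coloring, and here the coding case $\mathsf{p}=i$ differs essentially from the template. The subtlety is that a generic branch of $T_i$ meets only a ``thin'' set of coding nodes, so the naive substitution of a fixed ultrafilter fails: the set of levels at which a branch actually passes through a coding node need not be $U$-large. I will circumvent this by taking the $i$-th factor of the product to be $[S_i]$ rather than $[T_i]$; a point $\sigma\in[S_i]$ is a cofinal chain of coding nodes, so $\sigma(s)$ is a coding node for \emph{every} $s$. Fixing a non-principal ultrafilter $U$ on $\omega$, I define $\gamma_U$ on $[S_i]\times\prod_{j\neq i}[T_j]$ by declaring $\gamma_U(\sigma,\langle x_j\rangle_{j\neq i})=c$ iff the set of those $s$ for which $\gamma$ of the type-$i$ tuple determined by the coding node $\sigma(s)$ and the nodes $x_j$ at the level of $\sigma(s)$ equals $c$ lies in $U$; this is total precisely because every $s$ contributes a genuine coding node. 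Applying $\mathsf{PG}_d$ yields somewhere dense $Z_i\subseteq[S_i]$ and $Y_j\subseteq[T_j]$ (for $j\neq i$) with $\gamma_U$ constantly equal to some $c^*$ on their product. For $\mathsf{p}=-1$ I would instead keep all factors equal to $[T_j]$ and let $\gamma_U$ take the $U$-majority $\gamma$-color over those levels at which the branch tuple is of type $-1$, i.e.\ misses the unique coding node of its level; since at each level the coding node is a single node, type $-1$ is the generic situation, and a standard thinning makes the coloring total on a somewhere dense grid.

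Finally I would build the embedding from the grid as in Proposition~\ref{Prop:PGHL}, recursing on levels: having placed a finite approximation, extend each coordinate along grid branches through prescribed immediate successors, and locate the next common level inside the intersection of the finitely many $U$-large ``good color'' sets attached to the branches currently in play, which is again $U$-large and hence infinite. The genuinely new demand is that at each coding level of $\mathbf{T}$ the approximation must actually contain a coding node extending the relevant stem, \emph{while} the chosen level is good for the color $c^*$; I expect this to be the main obstacle, and it is exactly what dictated the use of $[S_i]$. For $\mathsf{p}=i$ the coding nodes encountered along a grid branch $\sigma\in Z_i$ occur at a $U$-large set of ($S_i$-)levels already carrying color $c^*$, so a single level can be chosen that simultaneously supplies the required color and an available coding node extending the current coding stem, and the branching of $Z_i$ inside the perfect tree $S_i$ supplies the branching of the image coding tree. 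The residual work is bookkeeping: reconciling the ``coding depth'' along $\sigma$ with the ambient tree depth at which the factors $Y_j$ are read, and checking that the placed non-coding nodes avoid the unique coding node of their level (automatic, since distinct immediate successors give distinct nodes). The case $\mathsf{p}=-1$ runs through the same recursion, using that the coding nodes needed for a legal embedding can be inserted at levels that are simultaneously good for the type-$(-1)$ tuples; this is where the self-similarity of the coding pattern and the genericity of type $-1$ are used.
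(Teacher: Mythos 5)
Your proposal is correct and follows essentially the same route as the paper: the paper likewise restricts the distinguished factor to the (perfect Polish, dense $G_\delta$) set $X_i$ of branches meeting infinitely many coding nodes --- which is exactly your $[S_i]$ --- applies $\mathsf{PG}_d$ to an ultrafilter-majority coloring concentrated on the coding levels of that coordinate, and rebuilds the embedding level by level inside the resulting somewhere dense grid. The only cosmetic differences are that the paper uses a branch-indexed family $U\colon X_i\to\beta\omega$ with $C(y)\in U(y)$ where you use a single ultrafilter on the positions of coding nodes along the branch (equivalent via pushforward along the increasing enumeration of $C(y)$), and that the paper is no more detailed than you are about the $\mathsf{p}=-1$ bookkeeping.
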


We opt to give a relatively straightforward proof using $\mathsf{PG}_d$. Then by Shoenfield absoluteness, this yields a ZFC proof. 

\begin{proof}
	For each $i< d$, let $X_i\subseteq [T_i]$ consist of those branches which contain infinitely many coding nodes. Then $X_i$ is a dense $G_\delta$ subspace of $[T_i]$, so is itself a perfect Polish space. In the case where $\mathsf{p} = -1$, the proof is almost identical to that of Proposition~\ref{Prop:PGHL}, the key difference being that one works inside the space $\prod_{i< d} X_i$ so that when building $f$, we can ensure there are coding nodes where we need them.
	
	So now assume that without loss of generality $\mathsf{p} = 0$. For each $y\in X_0$, let $U\colon X_0\to \beta \omega$ be a function such that for every $y\in X_0$, we have that $C(y):= \{n< \omega: c_{\mathbf{T}}(n)\in y\}\in U(y)$. Form a coloring $\tilde{\gamma}\colon \prod_{i< d} X_i$ by setting $\tilde{\gamma}(y_0,...,y_{d-1}) = j< r$ iff $\{n\in C(y_0): \gamma(y_0(n),...,y_{d-1}(n)) = j\}\in U(y_0)$. Using $\mathsf{PG}_d$, find somewhere dense sets $Y_i\subseteq X_i$ for each $i< d$ such that $\prod_{i<d} Y_i$ is monochromatic, say with color $j< r$. Say that $(s_0,...,s_{d-1})\in \bigotimes \mathbf{T}_i$ is such that each $Y_i$ is dense above $s_i$. We now proceed to define $f\in \mathrm{Emb}(\mathbf{T}, \mathbf{T})$. Suppose $m< \omega$ and that $f$ has been defined on $\bigcup_{\ell< m} \mathbf{T}_i(\ell)$. If $m = 0$, then letting $\emptyset_i\in \mathbf{T}_i(0)$ denote the root, define $f'(\emptyset_i) = s_i$. If $m> 0$, we define the map $f'\colon \bigcup_{i< d}\mathbf{T}_i(m)$, where given $t\in \bigcup_{i<d}\mathbf{T}_i(m)$ with $t = s^\frown b$ for some $s\in \bigcup_{i<d} \mathbf{T}_i(m-1)$ and some $b< k$, then we set $f'(t) = f(s)^\frown b$. If $\mathbf{T}_0(m)$ does not contain a coding node, then we can define $f$ on $\bigcup_{i< d} \mathbf{T}_i(m)$ as follows. If $c_\mathbf{T}(m)$ exists, pick any $n< \omega$ such that $c_\mathbf{T}(n)\sqsupseteq f'(c_\mathbf{T}(m))$, set $f(c_\mathbf{T}(m)) = c_\mathbf{T}(n)$, and for every other $t\in \bigcup_{i< d}\mathbf{T}_i(m)$, let $f(t)\in \suc(f'(t), n)$ be any node. If $c_\mathbf{T}(m)$ does not exist, then pick any large enough $n< \omega$ and simply let $f(t)\in \suc(f'(t), n)$. If $c_\mathbf{T}(m)\in \mathbf{T}_0(m)$, first for each $i< d$ and each $t\in \mathbf{T}_i(m)$, fix a branch $y_t\in Y_i$ with $f'(t)\in y_0$. Writing $y = y_{c_\mathbf{T}(m)}$, then for any choice of $t_i\in \mathbf{T}_i(m)$ for $0< i< d$, we have $\{n\in C(y): \gamma(y(n), y_{t_i}(n),...,y_{t_{d-1}}(n)) = j\}\in U(y)$. Hence we can find $n\in C(y)$ so that for every $0< i< d$ and every $t_i\in \mathbf{T}_i(m)$, we have that $\gamma(y(n), y_{t_1}(n),...,y_{t_{d-1}}(n)) = j$. We then set $f(t) = y_t(n)$ for every $t\in \bigcup_{i<d} \mathbf{T}_i(m)$. 
\end{proof} 

One major difficulty in generalizing the above argument to the more general coding trees and aged embeddings of \cite{zucker} is that in general, the forcing one needs to use is not Cohen forcing. This suggests that rather than the principle $\mathsf{PG}$, one would ask for a combinatorial principle corresponding to each specific type of forcing used. Nonetheless, the following seems like a worthwhile question to ask.

\begin{question}
	\label{Que:OtherPrinciples}
	Is there a family of consistent combinatorial principles about partitions of structures on products of Polish spaces which implies all of the variants of the Halpern-L\"auchli theorem appearing in \cite{dobrinen} and \cite{zucker}?
\end{question}

\bibliographystyle{amsplain}
\bibliography{bib}

\end{document}